\theoremstyle{definition}
\newtheorem{Problem}{Problem}
\newtheorem{Lemma}{Lemma}
\newtheorem{Proposition}{Proposition}
\newtheorem{Theorem}{Theorem}
\newtheorem{Corollary}{Corollary}
\renewcommand{\Re}{\mathbb{R}}
\newcommand{\boldX}{\mathbf{X}}
\newtheorem*{remark}{Remark}
\title{Optimal Covariance Control for Stochastic Systems\\ Under Chance Constraints}
\author{Kazuhide Okamoto\thanks{K. Okamoto is a Ph.D. Candidate at the School of Aerospace Engineering, Georgia Institute of Technology, Atlanta, GA 30332-0150, USA. Email: kazuhide@gatech.edu} \qquad
Maxim Goldshtein\thanks{M. Goldshtein is a PhD Student at the School of Aerospace Engineering, Georgia Institute of Technology, Atlanta, GA 30332-0150, USA. Email:maxg@gatech.edu} \qquad
Panagiotis Tsiotras\thanks{P. Tsiotras is a Professor at the School of Aerospace Engineering and the Institute for Robotics \& Intelligent Machines, Georgia Institute of Technology, Atlanta, GA 30332-0150, USA. Email: tsiotras@gatech.edu}}
\begin{document}
\maketitle

\begin{abstract}
This work addresses the optimal covariance control problem for stochastic discrete-time linear time-varying systems subject to chance constraints. Covariance steering is a stochastic control problem to steer the system state Gaussian distribution to another Gaussian distribution while minimizing a cost function. To the best of our knowledge, covariance steering problems have never been discussed with probabilistic chance constraints although it is a natural extension. In this work, first we show that, unlike the case with no chance constraints, the covariance steering with chance constraints problem cannot decouple the mean and covariance steering sub-problems. Then we propose an approach to solve the covariance steering with chance constraints problem by converting it to a semidefinite programming problem. The proposed algorithm is verified using two simple numerical simulations. 
\end{abstract}

\section{Introduction}

In this work we address a problem of finite-horizon stochastic optimal control for a discrete-time linear time-varying stochastic system with a fully-observable state, a given Gaussian distribution of the initial state, and a state and input-independent white-noise Gaussian diffusion with given statistics. 
The control task is to steer the system state to the target Gaussian distribution, while minimizing a state and control expectation-dependent cost. 
In addition to the boundary condition, in the aim of adding robustness to the controller under stochastic uncertainty, we consider \emph{chance constraints}, which restricts the probability of violating the state constraints to be less than a pre-specified threshold.

Since the Gaussian distribution can be fully defined by its first two moments, this problem can be described as a finite-time optimal mean and covariance steering of a stochastic time-varying discrete linear system, with a boundary conditions in form of given initial and final mean and covariance, and with constraints on the trajectory in form of probability function. 

This chance-constrained optimal covariance control problem is relevant to a wide range of control and planning task, such as decentralized control of swarm robots~\cite{Shahrokhi2016}, closed-loop cooling~\cite{Vinante2008}, and others, in which the state is more naturally described by it's distribution, rather than a fixed set of values. In addition, this approach is readily-applicable to a stochastic MPC framework~\cite{Farina2013}.

\subsection*{Covariance steering problem}

The problem of controlling the state covariance of a linear system goes back to the late 80s. The so-called covariance steering (or ``Covariance Assignment'') problem was first introduced by Hotz and Skelton~\cite{hotz1985covariance}, where they computed the state feedback gains of a linear time-invariant system, 
such that the state covariance  converges to a pre-specified value.
Since then, many works have been devoted to this problem of infinite-horizon covariance assignment, 
both for continuous and discrete time systems~\cite{iwasaki1992quadratic,xu1992improved,grigoriadis1997minimum, Hotz1987,Collins1987}.
Recently, the finite-horizon covariance control problem has been investigated by a number of researchers~\cite{chen2016optimalI,chen2016optimalII,chen2018optimalIII,	Chen2016Thesis, ridderhof2018uncertainty}, relating to the problems of Shr{\"{o}}dinger bridges~\cite{Schrodinger1931} and the Optimal Mass Transfer~\cite{Kantorovich1942}. Others, including our previous work, showed that the finite covariance control problem solution can be seen as a LQG with a particular weights~\cite{halder2016finite, goldshtein2017finite}, which can be also formulated (and solved) as a LMI problem~\cite{bakolas2016optimalACC,bakolas2016optimalCDC,bakolas2018finite}.

\subsection*{Chance-constrained control}
A chance-constrained optimization has been extensively studied since 50's, with purpose of system design with guaranteed performance under uncertainty~\cite{Geletu2013}. A stochastic model-predictive control design with a chance-constraints has been solved using various techniques (see~\cite{Farina2016} for an extensive review).

\subsubsection*{Main contribution}

The current paper contributes to this line of work by adding chance state constraints to the underlying stochastic optimal covariance  steering problem. The covariance control problem is reformulated as an LMI, with a decision variable that is quadratic in the cost function. The chance constrains are presented as a deterministic constraint on the mean and the covariance, and added to the LMI formulation, allowing an efficient solution with any generic LMI solvers. 

To the best of the authors' knowledge, this work is the first that solves the covariance-steering problem with chance constraints.

\subsubsection*{Paper structure}

The remainder of this paper is organized as follows. 
Section~\ref{sec:ProblemStatement} formulates the problem we wish to address in this paper.
We consider a time-varying stochastic linear system and a general $L_2$-norm objective function. 
Before introducing the proposed approach to deal with the chance constraints, in Section~\ref{sec:NoChanceConstraint}, we describe the case without chance constraints, in which case the mean and covariance steering part can be nicely decoupled into two independent subproblems, namely, one for the mean and one for the second moment of the state.
In Section~\ref{sec:ProposedApproach} we provide the solution to the covariance steering problem with chance constraints and we show how the solution of the problem can be cast as a convex optimization problem.
In Section~\ref{sec:NumericalSimulation} we validate the effectiveness  of the proposed approach via numerical simulations.
Finally, Section~\ref{sec:Summary} briefly summarizes the contribution of this work and proposes some possible future research directions.

\section{Problem Statement}\label{sec:ProblemStatement}
 
In this section we formulate the finite-horizon optimal covariance control problem for a stochastic, linear, time-varying system in discrete-time subject to chance constraints. 
We then reformulate the problem using concatenated state, input, and noise vectors and transition matrices. 

\subsection{Problem Formulation}

We consider the following discrete-time stochastic linear system (possibly time-varying) with additive uncertainty, described by the equations
\begin{equation}  \label{eq:SystemDynamics}
x_{k+1} = A_kx_k + B_ku_k + D_kw_k, 
\end{equation}
where $k = 0,1,\ldots,N-1$ is the time step, $x\in\Re^{n_x}$ is the state, $u\in\Re^{n_u}$ is the control input, and $w\in\Re^{n_w}$ is a zero-mean white Gaussian noise with unit covariance, that is, 
\begin{equation}
\mathbb{E}\left[w_k\right] = 0, \qquad \qquad
\mathbb{E}\left[w_{k_1}{w_{k_2}^{\top}}\right] = 
\begin{cases} I_{n_w}, &\mbox{if } k_1 = k_2,\\
0, &\mbox{otherwise.}
\end{cases}\label{eq:GaussianNoise}
\end{equation}
We also assume that
\begin{equation}
\mathbb{E}\left[x_{k_1}w_{k_2}^{\top}\right]=0,\qquad0 \leq k_1 \leq k_2 \leq N.
\end{equation}
The initial state $x_0$ is a random vector that is drawn from the multi-variate normal distribution
\begin{equation}
x_0\sim\mathcal{N}(\mu_0,\Sigma_0),\label{eq:x0}
\end{equation}
where $\mu_0 \in \Re^{n_x}$ is the initial state mean and $\Sigma_0 \in \Re^{n_x \times n_x}$ is the initial state covariance. 
We assume that $\Sigma_0 \succeq 0$. 
Our objective is to steer the trajectories of the system~(\ref{eq:SystemDynamics}) from this initial distribution to the terminal Gaussian distribution
\begin{equation}  \label{eq:xN}
x_N\sim\mathcal{N}(\mu_N,\Sigma_N),
\end{equation}
where $\mu_N \in \Re^{n_x}$ and $\Sigma_N \in \Re^{n_x \times n_x}$ with $\Sigma_N \succ 0$, at a given time $N$, while minimizing the cost function
\begin{equation}  \label{eq:originalObjFunc}
J(x_0,\ldots,x_{N-1},u_0,\ldots,u_{N-1}) = \mathbb{E}\left[\sum_{k=0}^{N-1}x_k^\top Q_k x_k + u_k^\top R_k u_k\right],
\end{equation}
where $Q_k \succeq 0$ and $R_k \succ 0$ for all $k=0,1,\ldots,N-1$.

Note that (\ref{eq:originalObjFunc}) does not include a terminal cost owing to the terminal constraint~(\ref{eq:xN}).
The objective is to compute the optimal control input, which ensures that the probability of  the state violation at any given time is below a pre-specified threshold, say, 
\begin{equation}  \label{eq:originalChanceConstraints}
\texttt{Pr}(x_k \notin \chi) \leq P_{\rm{fail}},\qquad k=0,\ldots,N, 
\end{equation}
where $\texttt{Pr}()$ denotes the probability of an event, $\chi \subset \Re^{n_x}$ is the state constraint set, and $P_{\rm{fail}} \in [0, 1]$ is the threshold for the probability of failure. 
Note that $\mu_0$, $\mu_N$, $\Sigma_0$, and $\Sigma_N$ need to be designed to satisfy this chance constraint.
Optimization problems with these types of constraints are known as the \emph{chance-constrained} optimization problems~\cite{mesbah2016stochastic}.
In this work, we assume for simplicity  that $\chi$ is convex, but chance-constraints with non-convex constraints are also possible (see, for instance,~\cite{ono2010chance})

It is assumed that the system (\ref{eq:SystemDynamics}) is controllable, that is $x_N$ is reachable for any $x_N \in \Re^{n_x}$, provided that $w_k = 0$ for $k \in [0, N-1]$. This implies that given any $x_N \in \Re^{n_x}$ and $x_0 \in \Re^{n_x}$, there exists a sequence of control inputs $\{u_k\}_{k=0}^{N-1}$ that steers $x_0$ to $x_N$.

\subsection{Preliminaries}

We provide an alternative description of the system dynamics in (\ref{eq:SystemDynamics}) that will be instrumental for solving the problem.
The discussion below is borrowed from~\cite{goldshtein2017finite}.

At each time step $k$, we explicitly compute the system state $x_k$ as follows. 
Let $A_{k_1,k_0}$, $B_{k_1,k_0}$, and $D_{k_1,k_0}$, where $k_1>k_0$, denote the transition matrices of the state, input, and the noise term from step $k_0$ to step $k_1+1$, respectively, as follows
\begin{subequations}
	\begin{align}
	&A_{k_1,k_0} = A_{k_{1}}A_{k_{1}-1}\cdots A_{k_{0}},\\
	&B_{k_1,k_0} = A_{k_{1},k_{0}+1}B_{k_0},\\
	&D_{k_1,k_0} = A_{k_{1},k_{0}+1}D_{k_0}.
	\end{align}
\end{subequations}
We define the augmented vectors $U_{k} \in \Re^{(k+1)n_u}$ and $W_{k} \in \Re^{(k+1)n_w}$ as
\begin{equation}
U_{k} = \begin{bmatrix}
u_{0}\\u_{1}\\ \vdots \\ u_{k}
\end{bmatrix},
\qquad
W_{k} = \begin{bmatrix}
w_{0}\\w_{1}\\ \vdots \\ w_{k}
\end{bmatrix}.
\end{equation}
Then $x_k$ can be equivalently computed from
\begin{equation}
x_k = \bar{A}_k x_0 + \bar{B}_k U_k + \bar{D}_k W_k,\label{eq:SystemDynamicsConcatenated}
\end{equation}
where 
\begin{subequations}
	\begin{align}
	&\bar{A}_k = A_{k-1,0},\\
	&\bar{B}_k = \begin{bmatrix}
	B_{k-1,0} & B_{k-1,1} & \cdots & B_{k-1}
	\end{bmatrix},\\
	&\bar{D}_k = \begin{bmatrix}
	D_{k-1,0} & D_{k-1,1} & \cdots & D_{k-1}
	\end{bmatrix}.
	\end{align}
\end{subequations}
Furthermore, we introduce the augmented state vector $X \in \Re^{(k+1) n_x}$ as follows
\begin{equation}
X_k=\begin{bmatrix}
x_{0}\\x_{1}\\\vdots\\x_{k} 
\end{bmatrix}.
\end{equation}
It follows that the system dynamics~(\ref{eq:SystemDynamics}) take the equivalent form
\begin{equation}
X = \mathcal{A}x_{0} + \mathcal{B}U+\mathcal{D}W, \label{eq:convertedStateDynamics}
\end{equation}
where $X = X_{N}\in \Re^{(N+1) n_x} $, $U = U_{N-1}\in\Re^{Nn_u}$, and $W = W_{N-1}\in \Re^{Nn_w}$,
and the matrices $\mathcal{A}\in\Re^{(N+1)n_x\times n_x}$, $\mathcal{B}\in\Re^{(N+1)n_x\times Nn_u}$, and $\mathcal{D}\in\Re^{(N+1)n_x\times Nn_w}$ are defined as
\begin{subequations}
 \begin{equation}
 \mathcal{A} = \begin{bmatrix}
 I \\
 \bar{A}_1\\
 \bar{A}_2\\
 \vdots\\
 \bar{A}_{N}
 \end{bmatrix},
\qquad\qquad
 \mathcal{B} = \begin{bmatrix}
 0 & 0 & \cdots & 0\\
 B_0& 0 & \cdots & 0\\
 B_{1,0}& B_1 & \cdots & 0\\
 \vdots & \vdots & \vdots & \vdots\\
 B_{N-1,0}& B_{N-1,1} & \cdots & B_{N-1}
 \end{bmatrix},
 \end{equation}
 and
 \begin{equation}
\mathcal{D} = \begin{bmatrix}
0 & 0 & \cdots & 0\\
D_0& 0 & \cdots & 0\\
D_{1,0}& D_1 & \cdots & 0\\
\vdots & \vdots & \vdots & \vdots\\
D_{N-1,0}& D_{N-1,1} & \cdots & D_{N-1}
\end{bmatrix}.
\end{equation}
\end{subequations}
Note that 
\begin{subequations}
	\begin{align}  \label{eq:x0x0x0WWW}
		\mathbb{E}[x_0x_0^\top] &= \Sigma_0 + \mu_0\mu_0^\top,\\
		\mathbb{E}[x_0W^\top] &= 0, \\
		\mathbb{E}[WW^\top] &= I_{Nn_w}.
	\end{align}
\end{subequations}

Using the previous expressions for $X$ and $U$, we may rewrite the objective function in ~(\ref{eq:originalObjFunc}) as follows 
\begin{equation}
J(X,U) = \mathbb{E}\left[X^\top \bar{Q} X + U^\top \bar{R} U \right],\label{eq:convertedObjFunc}
\end{equation}
where $\bar{Q} = \texttt{blkdiag}(Q_0,Q_1,\ldots,Q_{N-1},0) \in \Re^{(N+1)n_x\times(N+1)n_x}$ and $\bar{R} = \texttt{blkdiag}(R_0,R_1,\ldots,R_{N-1}) \in \Re^{Nn_u\times Nn_u}$. 
Note that, since $Q_k\succeq0$ and $R_k\succ0$ for all $k = 0,1,\ldots,N-1 $, it follows that $\bar{Q}\succeq0$ and $\bar{R}\succ0$.

The boundary conditions~(\ref{eq:x0}) and (\ref{eq:xN}) also take the form 
\begin{subequations}
	\begin{align}
		\mu_0 &= E_0\mathbb{E}[X],\label{eq:mu0}\\
		\Sigma_0 &= E_0\left(\mathbb{E}[XX^\top] - \mathbb{E}[X]\mathbb{E}[X]^\top\right) E_0^\top,       \label{eq:Sigma0}
	\end{align}\label{eq:X0}
\end{subequations}
and
\begin{subequations}
	\begin{align}
	\mu_N &= E_N\mathbb{E}[X],\label{eq:muN}\\
	\Sigma_N &= E_N\left(\mathbb{E}[XX^\top] - \mathbb{E}[X]\mathbb{E}[X]^\top\right) E_N^\top,		\label{eq:SigmaN}
	\end{align}\label{eq:XN}
\end{subequations}
where  $E_0 \triangleq \left[I_{n_x},0,\ldots,0\right]\in \Re^{n_x\times(N+1)n_x}$ and $E_N \triangleq \left[0,\ldots,0,I_{n_x}\right]\in \Re^{n_x\times(N+1)n_x}$, respectively. 

Finally,  the chance constraints~(\ref{eq:originalChanceConstraints}) can be rewritten as
\begin{equation}  \label{eq:convertedChanceConstraints}
\texttt{Pr}(X \notin \mathcal{X}) \leq P_{\rm{fail}}, 
\end{equation}
where $\mathcal{X} \subset \Re^{(N+1)n_x}$ is a convex set.

The objective of this paper is to solve the following problem.

\begin{Problem}\label{prob:OriginalProblem}
Given the system (\ref{eq:convertedStateDynamics}), find the control sequence $U^\ast$ that minimizes 
the cost function Eq.~(\ref{eq:convertedObjFunc})  subject to the initial state constraints~(\ref{eq:X0}), the terminal state constraints~(\ref{eq:XN}), 
and the chance constraint~(\ref{eq:convertedChanceConstraints}).
\end{Problem}

In Section~{\ref{sec:ProposedApproach} we show how to solve Problem~\ref{prob:OriginalProblem} by converting it to a convex programming problem.
Before doing that, we first investigate the case without chance constraints.
The investigation of this case will clarify the difference of our work from prior research works on covariance steering, where the mean of the initial and terminal Gaussian distribution constraints are assumed to be zero. 

\section{No Chance Constraint Case \label{sec:NoChanceConstraint}}

Before discussing the general case with chance constraints, in this section we introduce the case without chance constraints and show that, similarly to the work by Goldshtein and Tsiotras \cite{goldshtein2017finite}, where the authors considered the case with minimal control effort, 
it is possible to separately solve the mean and the covariance steering optimization problems, 
even with the more general $\ell_2$-norm objective function of equation~(\ref{eq:originalObjFunc}).

\subsection{Separation of Mean and Covariance Problems}

In \cite{goldshtein2017finite} the authors showed that it is possible to separate the mean and the covariance evolutions of the system.
Along with a similar separation of the cost, this observation allowed to 
independently design a \emph{mean-steering} and a \emph{covariance-steering} controller independently from one another. 
In this section we show that a similar result  holds for the cost~(\ref{eq:originalObjFunc}). 
While the objective function in \cite{goldshtein2017finite} had only a control penalty term
\begin{equation} \label{eq:minimumEffortCost}
	J(U) = \mathbb{E}[\sum_{k=0}^{N-1}u_k^\top u_k],
\end{equation}
in this paper we work with the more general cost~(\ref{eq:originalObjFunc}).
First, it follows immediately from Eq.~(\ref{eq:SystemDynamicsConcatenated}) that
	\begin{equation}  \label{eq:mean}
	\mu_k \triangleq \mathbb{E}[x_k] = \bar{A}_k \mu_0 + \bar{B}_k \bar{U}_k,
	\end{equation}
	where $\bar{U}_k = \mathbb{E}[U_k]$. 
	Furthermore, by defining
	\begin{equation}
	\tilde{U}_k \triangleq U_k - \bar{U}_k, \qquad 
	\tilde{x}_k \triangleq x_k - \mu_k, 
	\end{equation}
	and using~(\ref{eq:SystemDynamicsConcatenated}), the following equation holds for $\tilde{x}_k$
	\begin{equation} \label{eq:xtilde}
	\tilde{x}_k = \bar{A}_k\tilde{x}_0 + \bar{B}_k\tilde{U}_k+\bar{D}_k W_k.
	\end{equation}
Furthermore,
	\begin{subequations}
		\begin{align}
		\Sigma_k &\triangleq \mathbb{E}[\tilde{x}_k\tilde{x}_k^\top],\\
		&= \mathbb{E}\left[\big(\bar{A}_k\tilde{x}_0 + \bar{B}_k\tilde{U}_k+\bar{D}_kW_k\right)\left(\bar{A}_k\tilde{x}_0 + \bar{B}_k\tilde{U}_k+\bar{D}_kW_k\big)^\top
		\right],\\
		&= \bar{A}_k\mathbb{E}[\tilde{x}_0\tilde{x}_0^\top]\bar{A}_k^\top
		+ \bar{A}_k\mathbb{E}[\tilde{x}_0\tilde{U}_k^\top]\bar{B}_k^\top
		+ \bar{B}_k\mathbb{E}[\tilde{U}_k\tilde{x}_0^\top]\bar{A}_k^\top
		+ \bar{B}_k\mathbb{E}[\tilde{U}_k\tilde{U}_k^\top]\bar{B}_k^\top \nonumber\\
		&\hspace{2cm}
		+ \bar{D}_k\mathbb{E}[{W}_k{W}_k^\top]\bar{D}_k^\top
		+ \bar{D}_{k-1}\mathbb{E}[{W}_{k-1}\tilde{U}_k^\top]\bar{B}_k^\top
		+ \bar{B}_k\mathbb{E}[\tilde{U}_k {W}_{k-1}^\top]\bar{D}_{k-1}^\top.
		\end{align}
	\end{subequations}
	Note  that the evolution of the mean $\mu_k$ from (\ref{eq:mean}) depends only on $\bar{U}_k$, whereas the evolution of $\tilde{x}_k$ and  $\Sigma_k$ depend solely on $\tilde{U}_k$ and $W_k$. 
It follows from Eq.~(\ref{eq:convertedStateDynamics}) and (\ref{eq:mean}) that
\begin{equation}  \label{eq:meanDynamics}
	\bar{X} \triangleq \mathbb{E}[X] = \mathcal{A}\mu_0 + \mathcal{B}\bar{U},
\end{equation}
and from (\ref{eq:xtilde}) that
\begin{equation} \label{eq:covarianceDynamics}
	\tilde{X} \triangleq X - \mathbb{E}[X] = \mathcal{A}\tilde{x}_0 + \mathcal{B}\tilde{U} + \mathcal{D}W.
\end{equation}
	The objective function~(\ref{eq:convertedObjFunc}) can also be rewritten as follows
	\begin{subequations}
		\begin{align}
	J(X,U) 
	&= \mathbb{E}\left[X^\top \!\bar{Q} X + U^\top \!\bar{R} U\right]\\
		&= \mathtt{tr}\big(\bar{Q}\,\mathbb{E}[\tilde{X}\tilde{X}^\top]\big)
		+ \bar{X}^\top\!\bar{Q} \bar{X} 
		+ \mathtt{tr}\big(\bar{R}\mathbb{E}[\tilde{U}\tilde{U}^\top]\big)
		+ \bar{U}^\top\!\bar{R} \bar{U},\\
		&=J_\mu(\bar{X},\bar{U}) + J_\Sigma(\tilde{X},\tilde{U}),
		\end{align}
	\end{subequations}
	where
	\begin{equation}
	J_\mu(\bar{X},\bar{U}) = \bar{X}^\top \bar{Q} \bar{X} + \bar{U}^\top \bar{R} \bar{U},		\label{eq:Jmu}
	\end{equation}
	and
	\begin{equation}
	J_\Sigma(\tilde{X},\tilde{U}) =  \mathtt{tr}\big(\bar{Q}\mathbb{E}[\tilde{X}\tilde{X}^\top]\big)
	+ \mathtt{tr}\big(\bar{R}\mathbb{E}[\tilde{U}\tilde{U}^\top]\big), \label{eq:JSigma}
	\end{equation}
and	where $\texttt{tr}()$ denotes the trace of a matrix. 
	It follows that the original optimization problem in terms of $(X,U)$ is equivalent to two separate optimization problems in terms of $(\bar{X},\bar{U})$
and $(\tilde{X},\tilde{U})$ with optimization costs (\ref{eq:Jmu}) and (\ref{eq:JSigma}), respectively.

We have therefore shown the following result.

\begin{Proposition}
Let the system (\ref{eq:convertedStateDynamics}), the initial and terminal state constraints (\ref{eq:x0}) and (\ref{eq:xN}), and the objective function (\ref{eq:originalObjFunc}). 
The control sequence $U^*$ that solves this optimization problem is given by  $U^* = \bar{U}^* + \tilde{U}^*$, where $\bar{U}^*$ 
solves the optimization problem 
\begin{equation} \label{MeanSteerProb}
\text{Mean Steering}
\left\{
\begin{aligned}
\min_{(\bar{X},\bar{U})} J_\mu(\bar{X},\bar{U}) = \bar{X}^\top &\bar{Q} \bar{X} + \bar{U}^\top \bar{R} \bar{U}\\
\mathrm{subject~to}~~\bar{X} = \mathcal{A}\mu_0 &+ \mathcal{B}\bar{U},\\
E_0 \bar{X}  = \mu_0,~~& E_N \bar{X} = \mu_N,
\end{aligned}
\right.
\end{equation}
and $\tilde{U}^*$ solves the optimization problem
\begin{equation}   \label{CovSteerProb}
\text{Covariance Steering}
\left\{
\begin{aligned}
\min_{(\tilde{X},\tilde{U})} J_\Sigma(\tilde{X},\tilde{U}) =  \mathtt{tr}\big(\bar{Q}\mathbb{E}[&\tilde{X}\tilde{X}^\top]\big)
	+ \mathtt{tr}\big (\bar{R}\mathbb{E}[\tilde{U}\tilde{U}^\top]\big),\\
\mathrm{subject~to}~~\tilde{X} \triangleq X - \mathbb{E}[X] &= \mathcal{A}\tilde{x}_0 + \mathcal{B}\tilde{U} + \mathcal{D}W\\
E_0 \tilde{X}\tilde{X}^\top E_0^\top = \Sigma_0,&~~ E_N \tilde{X}\tilde{X}^\top E_N^\top = \Sigma_N. 
\end{aligned}
\right.
\end{equation}

\end{Proposition}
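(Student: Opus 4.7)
The bulk of the work is already done in the preceding display: the cost has been split as $J=J_\mu(\bar X,\bar U)+J_\Sigma(\tilde X,\tilde U)$, and the dynamics have been split into the deterministic mean recursion $\bar X=\mathcal A\mu_0+\mathcal B\bar U$ and the zero-mean recursion $\tilde X=\mathcal A\tilde x_0+\mathcal B\tilde U+\mathcal D W$. My plan is therefore to (i) show that the boundary conditions on $X$ factor cleanly through the same $\bar U/\tilde U$ split, and (ii) invoke the elementary principle that a separable program in disjoint variables is minimized coordinatewise.

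First I would formally write any admissible $U$ as $U=\bar U+\tilde U$ with $\bar U=\mathbb{E}[U]$ and $\tilde U=U-\bar U$, so that $\mathbb{E}[\tilde U]=0$ by construction; this decomposition is an isomorphism between the space of admissible $U$ and the product space of deterministic sequences $\bar U$ together with zero-mean random sequences $\tilde U$. Taking expectations of the original boundary condition $x_0\sim\mathcal N(\mu_0,\Sigma_0)$ and using $E_0\bar X=\mathbb{E}[x_0]$, $E_0\tilde X\tilde X^\top E_0^\top=\mathrm{Cov}(x_0)$ shows the initial constraint decomposes into $E_0\bar X=\mu_0$ (which by (\ref{eq:meanDynamics}) is a linear constraint on $\bar U$ only) and $E_0\tilde X\tilde X^\top E_0^\top=\Sigma_0$ (a constraint on $\tilde U$ only via (\ref{eq:covarianceDynamics})). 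Exactly the same argument applied at $k=N$ yields $E_N\bar X=\mu_N$ on the $\bar U$ side and $E_N\tilde X\tilde X^\top E_N^\top=\Sigma_N$ on the $\tilde U$ side. Thus the feasible set of the original Problem is the Cartesian product of the feasible sets of (\ref{MeanSteerProb}) and (\ref{CovSteerProb}).

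Next, since the cost $J_\mu$ depends only on $(\bar X,\bar U)$ and the cost $J_\Sigma$ depends only on $(\tilde X,\tilde U)$, and the two feasible sets are disjoint in their variables, we have
\begin{equation*}
\min_U J(X,U)=\min_{\bar U} J_\mu(\bar X,\bar U)+\min_{\tilde U} J_\Sigma(\tilde X,\tilde U),
\end{equation*}
and any pair of minimizers $(\bar U^\ast,\tilde U^\ast)$ yields an optimizer $U^\ast=\bar U^\ast+\tilde U^\ast$ of the original problem; conversely any optimizer $U^\ast$ splits into minimizers of the two subproblems. This is the desired conclusion.

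The only step that requires a little care is the cost decomposition, which was carried out just before the proposition: the cross terms such as $\mathbb{E}[\bar X^\top\bar Q\tilde X]=\bar X^\top\bar Q\,\mathbb{E}[\tilde X]=0$ and the analogous ones for $U$ vanish precisely because $\tilde X$ and $\tilde U$ are zero-mean. Aside from that, the proposition is essentially a bookkeeping consequence of the separability already exhibited, so I do not expect any real obstacle; the contribution of the statement is the observation that the $\ell_2$-cost (\ref{eq:originalObjFunc}) preserves the same decoupling as the pure control-effort cost (\ref{eq:minimumEffortCost}) treated in \cite{goldshtein2017finite}.
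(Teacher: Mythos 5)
Your proposal is correct and follows essentially the same route as the paper, which establishes the proposition via the derivation immediately preceding it: the split $U=\bar U+\tilde U$, the decoupled mean dynamics (\ref{eq:meanDynamics}) and fluctuation dynamics (\ref{eq:covarianceDynamics}), and the cost separation $J=J_\mu+J_\Sigma$ with the cross terms vanishing because $\tilde X$ and $\tilde U$ are zero-mean. Your explicit remark that the boundary conditions factor so the feasible set is a Cartesian product, making the minimization coordinatewise, merely spells out the bookkeeping the paper leaves implicit.
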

The rest of this section introduces the methods to solve these two subproblems.

\subsection{Optimal Mean Steering}

The solution to the optimal mean steering subproblem is summarized in the following proposition.

\begin{Proposition}
The optimal control sequence that solves
 the optimization problem (\ref{MeanSteerProb}) 
is given by
	\begin{equation}
	\bar{U}^\ast = \mathcal{R}^{-1}\left(\mathcal{B}^\top\bar{Q}\mathcal{A}\mu_0+ \bar{B}_N^\top(\bar{B}_N\mathcal{R}^{-1}\bar{B}_N^\top)^{-1}
	\left(\mu_N-\bar{A}_N\mu_0-\bar{B}_N\mathcal{R}^{-1}\mathcal{B}^\top\bar{Q}\mathcal{A}\mu_0\right)\right),		\label{eq:meanSteerController}
	\end{equation}
	where $\mathcal{R} = (\mathcal{B}^\top\bar{Q}\mathcal{B}+\bar{R})$.
\end{Proposition}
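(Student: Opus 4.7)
The plan is to treat (\ref{MeanSteerProb}) as a deterministic convex quadratic program with linear equality constraints and solve it by Lagrange multipliers. First, I would eliminate $\bar{X}$ using the dynamics constraint $\bar{X} = \mathcal{A}\mu_0 + \mathcal{B}\bar{U}$, which reduces the cost to a strictly convex quadratic in $\bar{U}$ alone (strict convexity follows from $\bar{R} \succ 0$, hence $\mathcal{R} = \mathcal{B}^\top \bar{Q}\mathcal{B} + \bar{R} \succ 0$). Before doing this, I would observe that the initial boundary condition $E_0 \bar{X} = \mu_0$ is redundant: since $\mathcal{A}$ begins with a block $I$ and $\mathcal{B}$ begins with a block of zeros, $E_0 \mathcal{A} = I$ and $E_0 \mathcal{B} = 0$, so $E_0 \bar{X} = \mu_0$ holds automatically for any choice of $\bar{U}$. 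The only remaining constraint is the terminal one, which, using $E_N \mathcal{A} = \bar{A}_N$ and $E_N \mathcal{B} = \bar{B}_N$, reads $\bar{A}_N \mu_0 + \bar{B}_N \bar{U} = \mu_N$.

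Next, I would form the Lagrangian
\begin{equation*}
L(\bar{U},\lambda) = (\mathcal{A}\mu_0 + \mathcal{B}\bar{U})^\top \bar{Q}(\mathcal{A}\mu_0 + \mathcal{B}\bar{U}) + \bar{U}^\top \bar{R}\bar{U} + 2\lambda^\top \bigl(\bar{A}_N \mu_0 + \bar{B}_N \bar{U} - \mu_N\bigr),
\end{equation*}
and write the stationarity condition $\partial L/\partial \bar{U} = 0$. This yields a linear system of the form $\mathcal{R}\bar{U} = -\mathcal{B}^\top \bar{Q}\mathcal{A}\mu_0 - \bar{B}_N^\top \lambda$ (modulo the sign chosen for $\lambda$), which can be solved explicitly as $\bar{U} = -\mathcal{R}^{-1}\bigl(\mathcal{B}^\top \bar{Q}\mathcal{A}\mu_0 + \bar{B}_N^\top \lambda\bigr)$ because $\mathcal{R}\succ 0$ is invertible.

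The third step is to determine $\lambda$ by substituting this expression into the terminal constraint $\bar{B}_N \bar{U} = \mu_N - \bar{A}_N \mu_0$. This produces a linear equation of the form $\bar{B}_N \mathcal{R}^{-1} \bar{B}_N^\top \lambda = \mu_N - \bar{A}_N \mu_0 - \bar{B}_N \mathcal{R}^{-1} \mathcal{B}^\top \bar{Q} \mathcal{A}\mu_0$ (up to sign), from which $\lambda$ is recovered by inverting $\bar{B}_N \mathcal{R}^{-1} \bar{B}_N^\top$. Plugging this $\lambda$ back into the expression for $\bar{U}$ yields the closed-form controller (\ref{eq:meanSteerController}).

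Finally, I would need to justify that the matrix $\bar{B}_N \mathcal{R}^{-1} \bar{B}_N^\top$ is indeed invertible; this is the only nontrivial step beyond routine algebra. The argument invokes the controllability assumption stated at the end of Section~\ref{sec:ProblemStatement}: controllability of the nominal system implies that $\bar{B}_N$ has full row rank, and since $\mathcal{R}^{-1} \succ 0$, it follows that $\bar{B}_N \mathcal{R}^{-1} \bar{B}_N^\top \succ 0$ and hence invertible. Strict convexity of the objective together with feasibility of the linear constraint then guarantees that the first-order condition identifies the unique global minimizer, completing the proof. The main obstacle, if any, is purely bookkeeping: making sure the signs line up between the Lagrangian convention used and the formula displayed in the statement.
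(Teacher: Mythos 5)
Your proposal is correct and follows essentially the same route as the paper: eliminate $\bar{X}$ via the dynamics, keep only the terminal equality in the Lagrangian (the paper likewise drops the initial condition, which you correctly note is automatically satisfied), solve the stationarity condition for $\bar{U}$ in terms of $\lambda$, and recover $\lambda$ by substituting into $\bar{B}_N\bar{U}=\mu_N-\bar{A}_N\mu_0$. The only cosmetic difference is the invertibility of $\bar{B}_N\mathcal{R}^{-1}\bar{B}_N^\top$: you argue directly from full row rank of $\bar{B}_N$ (from controllability) and $\mathcal{R}^{-1}\succ0$, whereas the paper invokes a rank inequality from Bernstein, but both hinge on the same facts.
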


\begin{proof}
Since the terminal constraint is $\mu_N = E_N \bar{X} = \bar{A}_N\mu_0 + \bar{B}_N\bar{U}$
	we can write the Lagrangian as 
	\begin{align}
		\mathcal{L}(\bar{U},\lambda) &= \bar{X}^\top\bar{Q}\bar{X} + \bar{U}^\top \bar{R}\bar{U} + \lambda^\top(\mu_N - \bar{A}_N\mu_0 - \bar{B}_N\bar{U})\\
		&= (\mathcal{A}\mu_0 + \mathcal{B}\bar{U})^\top\bar{Q}(\mathcal{A}\mu_0 + \mathcal{B}\bar{U}) + \bar{U}^\top \bar{R}\bar{U} + \lambda^\top(\mu_N - \bar{A}_N\mu_0 - \bar{B}_N\bar{U}),
	\end{align}
	where $\lambda\in\Re^{n_x}$. 
	The first-order optimality condition yields
	\begin{equation}
		\nabla_{\bar{U}}\mathcal{L}=2(\mathcal{B}^\top\bar{Q}\mathcal{B}+\bar{R})\bar{U} + 2\mathcal{B}^\top\bar{Q}\mathcal{A}\mu_0 - \bar{B}_N^\top\lambda = 0.
	\end{equation}
	Thus,
	\begin{equation}
		\bar{U}^* = \mathcal{R}^{-1}(\mathcal{B}^\top\bar{Q}\mathcal{A}\mu_0+ \frac{1}{2}\bar{B}_N^\top\lambda),\label{eq:EUwlambda}
	\end{equation}
	where $\mathcal{R} = (\mathcal{B}^\top\bar{Q}\mathcal{B}+\bar{R})$ is invertible because of the second-order optimality condition
	\begin{equation}
	\nabla_{\bar{U}\bar{U}}\mathcal{L} = \mathcal{B}^\top\bar{Q}\mathcal{B}+\bar{R} \succ 0.
	\end{equation}
	
In order  to find the optimal value of $\lambda$
we substitute equation~(\ref{eq:EUwlambda}) into the terminal constraint  to obtain
	\begin{equation}
			\mu_N = \bar{A}_N\mu_0 + \bar{B}_N\mathcal{R}^{-1}(\mathcal{B}^\top\bar{Q}\mathcal{A}\mu_0+ \frac{1}{2}\bar{B}_N^\top\lambda),
	\end{equation}
	or
	\begin{equation}
	\frac{1}{2}\bar{B}_N\mathcal{R}^{-1}\bar{B}_N^\top\lambda = \mu_N - \bar{A}_N\mu_0 - \bar{B}_N\mathcal{R}^{-1}\mathcal{B}^\top\bar{Q}\mathcal{A}\mu_0.
	\end{equation}
Note that $\mathrm{rank}(\bar{B}_N\mathcal{R}^{-1}\bar{B}_N^\top) = \mathrm{rank}(\mathcal{R}^{-1/2}\bar{B}_N^\top)$. 
Also, since the system is controllable, it follows that $\mathrm{rank}(\bar{B}_N)$ is full row rank, that is, $\mathrm{rank}(\bar{B}_N) = n_x$ \cite{goldshtein2017finite}.
In addition, since $\mathcal{R}$ is invertible, $\mathrm{rank}(\mathcal{R}^{-1/2}) = Nn_u$. 
It follows from Corollary 2.5.10 in \cite{bernstein2009matrix} that
\begin{subequations}
	\begin{align}
		\mathrm{rank}(\mathcal{R}^{-1/2}) + \mathrm{rank}(\bar{B}_N^\top)-Nn_u \leq &\mathrm{rank}(\mathcal{R}^{-1/2}\bar{B}_N^\top) \leq \mathtt{min}\left\{\mathrm{rank}(\mathcal{R}^{-1/2}), \mathrm{rank}(\bar{B}_N^\top)\right\}\\
	    n_x \leq &\mathrm{rank}(\bar{B}_N\mathcal{R}^{-1}\bar{B}_N^\top) \leq \mathtt{min}\left\{Nn_u, n_x\right\} = n_x
	\end{align}
\end{subequations}
Thus, the matrix $(\bar{B}_N\mathcal{R}^{-1}\bar{B}_N^\top)$ is full rank and invertible.
Therefore,
	\begin{equation}
		\lambda = 2(\bar{B}_N\mathcal{R}^{-1}\bar{B}_N^\top)^{-1}
		\left(\mu_N-\bar{A}_N\mu_0-\bar{B}_N\mathcal{R}^{-1}\mathcal{B}^\top\bar{Q}\mathcal{A}\mu_0\right).
	\end{equation}
By substituting in (\ref{eq:EUwlambda})  the expression for the optimal mean steering controller, the experssion~(\ref{eq:meanSteerController}) follows.
\end{proof}

By comparing (\ref{eq:meanSteerController}) with the corresponding controller in  \cite{goldshtein2017finite} we have the following immediate result.

\begin{Corollary}
	The minimum-effort mean-steering optimal controller introduced in \cite{goldshtein2017finite} is a special case of the optimal controller~(\ref{eq:meanSteerController}). 
\end{Corollary}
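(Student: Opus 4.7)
The plan is straightforward specialization: take the general formula~(\ref{eq:meanSteerController}) and restrict the weight matrices to match the minimum-effort cost~(\ref{eq:minimumEffortCost}), then verify that what remains agrees with the controller derived in~\cite{goldshtein2017finite}.

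First, I would identify the reduction of the cost. The minimum-effort cost~(\ref{eq:minimumEffortCost}) corresponds to choosing $Q_k = 0$ and $R_k = I_{n_u}$ for every $k = 0, 1, \ldots, N-1$ in the general cost~(\ref{eq:originalObjFunc}). Under these choices, the block-diagonal weights become $\bar{Q} = 0 \in \Re^{(N+1)n_x \times (N+1)n_x}$ and $\bar{R} = I_{Nn_u}$.

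Next, I would substitute these into~(\ref{eq:meanSteerController}). With $\bar{Q} = 0$, the matrix $\mathcal{R} = \mathcal{B}^\top \bar{Q} \mathcal{B} + \bar{R}$ collapses to $\mathcal{R} = I_{Nn_u}$, and the term $\mathcal{B}^\top \bar{Q} \mathcal{A} \mu_0$ vanishes. Consequently, the optimal mean-steering control simplifies to
\begin{equation*}
\bar{U}^\ast \;=\; \bar{B}_N^\top \bigl(\bar{B}_N \bar{B}_N^\top\bigr)^{-1} \bigl(\mu_N - \bar{A}_N \mu_0\bigr),
\end{equation*}
which is precisely the Moore--Penrose pseudoinverse solution of the underdetermined linear equation $\mu_N = \bar{A}_N \mu_0 + \bar{B}_N \bar{U}$; this is the minimum-norm (hence minimum-effort) mean-steering controller of~\cite{goldshtein2017finite}. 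Invertibility of $\bar{B}_N \bar{B}_N^\top$ has already been established in the preceding proof via the controllability rank argument, so nothing new is needed there.

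There is really no hard step here; the corollary is essentially an observation about the structure of~(\ref{eq:meanSteerController}). The only point to be careful about is making explicit the identification $Q_k = 0$, $R_k = I_{n_u}$ that links the general $L_2$ cost to the pure control-effort cost, since without that identification the reduction of $\mathcal{R}$ to the identity would not be visible. Once that is in place, the verification is a one-line substitution.
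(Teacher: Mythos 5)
Your proposal is correct and follows exactly the paper's own route: the paper proves the corollary by plugging $\bar{Q}=0$, $\bar{R}=I$ into (\ref{eq:meanSteerController}), which is precisely your specialization $Q_k=0$, $R_k=I_{n_u}$. Your added step of writing out the resulting pseudoinverse form $\bar{U}^\ast = \bar{B}_N^\top(\bar{B}_N\bar{B}_N^\top)^{-1}(\mu_N-\bar{A}_N\mu_0)$ is a useful explicit verification that the paper leaves implicit, but it is the same argument.
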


\begin{proof}
	The result directly follows by plugging $\bar{Q} = 0$, $\bar{R} = I$ into Eq.~(\ref{eq:meanSteerController}).
\end{proof}

\subsection{Optimal Covariance Steering}\label{sec:CSOnly} 

While many previous works have attempted to solve the optimal covariance-steering problem, the majority of them solve this problem subject to 
a minimum effort cost function as in (\ref{eq:minimumEffortCost}).
Bakolas \cite{bakolas2016optimalCDC} addressed the case with the more general $L_2$-norm cost function Eq.~(\ref{eq:originalObjFunc}) by using a convex relaxation to change the terminal constraint to an inequality as follows
\begin{equation}  \label{eq:SigmaNleqSigamF}
	E_N\left(\mathbb{E}[XX^\top] - \mathbb{E}[X]\mathbb{E}[X]^\top\right) E_N^\top \preceq \Sigma_N. 
\end{equation}
By  making the problem convex, it can be efficiently solved using standard convex programming solvers. 
At the same time, but independently, Halder and Wendel~\cite{halder2016finite} solved a problem with a similar terminal covariance constraint using a soft constraint on the terminal state covariance under continuous-time dynamics. 

\section{Chance Constrained Case} \label{sec:ProposedApproach}

This section introduces the proposed approach to solve the covariance steering problem with chance constraints in Problem~\ref{prob:OriginalProblem}.
\subsection{Proposed Approach}
First, we assume that, at each time step, the control input is represented as follows
\begin{equation}
	u_k = \ell_k \left[\mathbbm{1}_{n_x}^\top,x_0^\top,x_1^\top,\ldots,x_k^\top\right]^\top,
\end{equation}
where $\mathbbm{1}_{n_x} = [1,\ldots,1]^\top \in \Re^{n_x}$ and $\ell_k\in \Re^{n_u\times n_x(k+2)}$. 
Thus, we may write the relationship between $X$ and $U$ as follows
\begin{equation}
U=L \boldX,
\end{equation}
where $\boldX = [\mathbbm{1}_{n_x}^\top, X^\top]^\top \in \Re^{(N+2)n_x}$ is the augmented state sequence until step $N$ and $L \in \Re^{Nn_u \times (N+2)n_x}$ is the control gain matrix.
In order to ensure that the control input at time step $k$ depends only on $x_i$ for $i=0,1,\ldots,k$
(so that the control input $U$ is causally related to the state history, that is, it is non-anticipative)
the matrix $L$ has to be of the form
\begin{equation}
	L = [L_{\mathbbm{1}},L_X],\label{eq:LL1LX}
\end{equation}
where $L_{\mathbbm{1}} \in \Re^{Nn_u\times n_x}$ and $L_{X} \in \Re^{Nn_u\times (N+1)n_x}$ is a lower block triangular matrix. 
Using this $L$ we convert the problem from finding the optimal control input sequence $U^\ast$ to finding the optimal control gain matrix $L^\ast$.
It follows from~(\ref{eq:convertedStateDynamics}) that 
	\begin{equation}
	\boldX = \begin{bmatrix}I_{n_x} & 0\\0 & \mathcal{A}\end{bmatrix}\begin{bmatrix}\mathbbm{1}_{n_x}\\x_0\end{bmatrix} + \begin{bmatrix}
	0\\\mathcal{B}\end{bmatrix}L\boldX + 
	\begin{bmatrix}0\\\mathcal{D}\end{bmatrix}W,
\end{equation}
and hence
\begin{equation}   \label{eq:X}
	\boldX = (I-\bm{\mathcal{B}}L)^{-1}(\bm{\mathcal{A}}\boldX_0 + \bm{\mathcal{D}}W),
\end{equation}
where $\boldX_0 = [\mathbbm{1}_{n_x}^\top,~x_0^\top]^\top \in \Re^{2n_x}$, $\bm{\mathcal{A}} = \texttt{blkdiag}(I_{n_x},\mathcal{A}) \in \Re^{(N+2)n_x\times2n_x}$, $\bm{\mathcal{B}} = [0,~\mathcal{B}^\top]^\top \in \Re^{(N+2)n_x\times Nn_u}$, $\bm{\mathcal{D}} = [0,~\mathcal{D}^\top]^\top \in \Re^{(N+2)n_x\times Nn_w}$.
Note that $(I-\bm{\mathcal{B}}L)$ is invertible because 
\begin{equation}   \label{eq:00BL1BLX}
\bm{\mathcal{B}}L = \begin{bmatrix}0\\\mathcal{B}\end{bmatrix}[L_{\mathbbm{1}},L_X],
			   = \begin{bmatrix}0 & 0\\\mathcal{B}L_{\mathbbm{1}} & \mathcal{B}L_X\end{bmatrix}.
\end{equation}
Since $\mathcal{B}L_X$ is strictly lower-block triangular, $\bm{\mathcal{B}}L$ is also strictly lower-block triangular\footnote{Here, a strictly lower-block triangular matrix is a lower-block triangular matrix with zero matrices on its diagonal elements.}. 

Using $\boldX$ from (\ref{eq:X}) the objective function~(\ref{eq:convertedObjFunc}) can be written as 
\begin{equation}
	J(L) = \mathbb{E}\left[(\bm{\mathcal{A}} \boldX_0 + \bm{\mathcal{D}}W)^\top(I-\bm{\mathcal{B}}L)^{-\top} \bar{\bm{Q}} (I-\bm{\mathcal{B}}L)^{-1}(\bm{\mathcal{A}} \boldX_0 + \bm{\mathcal{D}}W) + \boldX^\top L^\top \bar{R} L\boldX \right],\label{eq:LObjFunc}
\end{equation}
where $\bar{\bm{Q}} = \texttt{blkdiag}(0,\bar{Q}) \in \Re^{(N+2)n_x\times(N+2)n_x}$. Note that $\bar{\bm{Q}} \succeq 0$.

Similarly to Bakolas~\cite{bakolas2016optimalCDC}, we introduce the decision variable $K$ such that
\begin{equation}
	K\triangleq L(I-\bm{\mathcal{B}}L)^{-1}.\label{eq:Kdef}
\end{equation}
It follows that $I+\bm{\mathcal{B}}K = (I - \bm{\mathcal{B}}L)^{-1}$.
Then, $\boldX$ and $U$ are rewritten as
\begin{equation}   \label{eq:X=IpBKAx0Dw}
\boldX = (I+\bm{\mathcal{B}}K)(\bm{\mathcal{A}}\boldX_0 + \bm{\mathcal{D}}W),
\end{equation}
and
\begin{equation}  \label{eq:U=AxDw}
U = K(\bm{\mathcal{A}}\boldX_0 + \bm{\mathcal{D}}W).
\end{equation}

Before continuing, we show that $K$ defined in (\ref{eq:Kdef}) is lower block triangular.
This ensures that the resulting $U$ is non-anticipative. 

\begin{Lemma}
	Let $L$ be defined as in Eq.~(\ref{eq:LL1LX}), let $\bm{\mathcal{B}}$ be a strictly lower block triangular matrix, and let $I$ be an identity matrix with proper dimensions. 
	Then, $K$ as defined in Eq.~(\ref{eq:Kdef}) is represented as 
	$ K = \begin{bmatrix}K_{\mathbbm{1}}&K_X\end{bmatrix}$,
	where $K_{\mathbbm{1}} \in \Re^{Nn_u\times n_x}$ and $K_{X} \in \Re^{Nn_u\times (N+1)n_x}$ is lower block triangular. 
\end{Lemma}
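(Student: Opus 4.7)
The plan is to proceed by expanding $(I-\bm{\mathcal{B}}L)^{-1}$ as a finite Neumann series and then tracking block structure through the product $L(I-\bm{\mathcal{B}}L)^{-1}$. Everything we need has essentially been set up already: the paragraph preceding the lemma establishes that $\bm{\mathcal{B}}L$ is strictly lower block triangular, and in particular nilpotent. Concretely, there is some power $p$ with $(\bm{\mathcal{B}}L)^{p}=0$, so
\begin{equation*}
(I-\bm{\mathcal{B}}L)^{-1} \;=\; \sum_{k=0}^{p-1}(\bm{\mathcal{B}}L)^{k},
\end{equation*}
and this sum is itself lower block triangular with $I$ on the block diagonal. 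That takes care of the analytical content; the rest is bookkeeping on partitioned matrices.

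Next I would use the explicit block form from (\ref{eq:00BL1BLX}) to compute the block structure of $(I-\bm{\mathcal{B}}L)^{-1}$. A quick induction shows that for $k\geq 1$,
\begin{equation*}
(\bm{\mathcal{B}}L)^{k} \;=\; \begin{bmatrix} 0 & 0 \\ (\mathcal{B}L_X)^{k-1}\mathcal{B}L_{\mathbbm{1}} & (\mathcal{B}L_X)^{k}\end{bmatrix},
\end{equation*}
so summing and re-indexing the Neumann series gives
\begin{equation*}
(I-\bm{\mathcal{B}}L)^{-1} \;=\; \begin{bmatrix} I_{n_x} & 0 \\ (I-\mathcal{B}L_X)^{-1}\mathcal{B}L_{\mathbbm{1}} & (I-\mathcal{B}L_X)^{-1}\end{bmatrix}.
\end{equation*}
The bottom-right block $(I-\mathcal{B}L_X)^{-1}$ is a square lower block triangular matrix with $I$ on its diagonal, again by the same Neumann argument applied to the strictly lower block triangular matrix $\mathcal{B}L_X$.

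Now I would simply multiply on the left by $L=[L_{\mathbbm{1}},L_X]$, which yields
\begin{equation*}
K \;=\; L(I-\bm{\mathcal{B}}L)^{-1} \;=\; \bigl[\,L_{\mathbbm{1}}+L_X(I-\mathcal{B}L_X)^{-1}\mathcal{B}L_{\mathbbm{1}},\; L_X(I-\mathcal{B}L_X)^{-1}\,\bigr],
\end{equation*}
so we can identify $K_{\mathbbm{1}}=L_{\mathbbm{1}}+L_X(I-\mathcal{B}L_X)^{-1}\mathcal{B}L_{\mathbbm{1}}$ and $K_X=L_X(I-\mathcal{B}L_X)^{-1}$. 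The only remaining point is to verify that $K_X$ is lower block triangular. This follows from a direct index chase: writing the $(k,i)$ block of $K_X$ as $\sum_j (L_X)_{kj}\bigl[(I-\mathcal{B}L_X)^{-1}\bigr]_{ji}$, the first factor vanishes unless $j\leq k$ and the second vanishes unless $i\leq j$, so the whole block is zero whenever $i>k$. The dimensions $K_{\mathbbm{1}}\in\Re^{Nn_u\times n_x}$ and $K_X\in\Re^{Nn_u\times (N+1)n_x}$ then follow from those of $L$.

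There isn't really a deep obstacle here; the only thing one has to be mildly careful about is that $L_X$ is rectangular ($Nn_u\times (N+1)n_x$) while $(I-\mathcal{B}L_X)^{-1}$ is square, so one must interpret ``lower block triangular'' for $L_X$ in the generalized sense from (\ref{eq:LL1LX}) (the last block column of $L_X$ is zero, and the preceding column blocks are zero above the main diagonal) and check that this structure is preserved by right multiplication by a square lower block triangular matrix, which is exactly what the index chase above does.
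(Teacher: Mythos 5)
Your proof is correct and follows essentially the same route as the paper: compute the block form of $(I-\bm{\mathcal{B}}L)^{-1}$, multiply on the left by $L=[L_{\mathbbm{1}},L_X]$, and conclude that $K_X=L_X(I-\mathcal{B}L_X)^{-1}$ is lower block triangular as a product of lower block triangular matrices. The only difference is that you justify the inverse via a finite Neumann series (nilpotency) and verify the triangularity-preservation by an explicit index chase, steps the paper simply asserts, so your write-up is a slightly more detailed version of the same argument.
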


\begin{proof}
	Recall from equation~(\ref{eq:00BL1BLX}) that $\bm{\mathcal{B}}L$ is a strictly lower block triangular matrix.
	Thus, $I-\bm{\mathcal{B}}L$ is lower block triangular and invertible
	\begin{equation}
		(I-\bm{\mathcal{B}}L)^{-1} = \begin{bmatrix}
		I & 0 \\ -\mathcal{B}L_{\mathbbm{1}} & I-\mathcal{B}L_X
		\end{bmatrix}^{-1}\\
		= \begin{bmatrix}
		I & 0 \\ (I-\mathcal{B}L_X)^{-1}\mathcal{B}L_{\mathbbm{1}} & (I-\mathcal{B}L_X)^{-1}
		\end{bmatrix}.
	\end{equation}
	Thus,
	\begin{align}
	K &= [L_{\mathbbm{1}},L_X]\begin{bmatrix}I & 0 \\ (I-\mathcal{B}L_X)^{-1}\mathcal{B}L_{\mathbbm{1}} & (I-\mathcal{B}L_X)^{-1}\end{bmatrix}\\
	  &= \begin{bmatrix}
		  L_{\mathbbm{1}} + L_X(I-\mathcal{B}L_X)^{-1}\mathcal{B}L_{\mathbbm{1}} & L_X(I-\mathcal{B}L_X)^{-1}
	  \end{bmatrix}
	\end{align}
	As the inverse of lower-block triangular matrix is also lower block triangular, $(I-\mathcal{B}L_X)^{-1}$ is also lower block triangular. 
	The multiplication of two lower block triangular matrices $L_X$ and $(I-\mathcal{B}L_X)^{-1}$ yields a matrix that is also lower block triangular, and hence $K$ is represented as 
	\begin{equation}\label{eq:KK1KX}
		K = \begin{bmatrix}K_{\mathbbm{1}}&K_X\end{bmatrix},
	\end{equation}
	where $K_{\mathbbm{1}} \in \Re^{Nn_u\times n_x}$ and $K_{X} \in \Re^{Nn_u\times (N+1)n_x}$ is lower block triangular. 

\end{proof}

We may now prove the following result.
 
\begin{Proposition}
	Let $\boldX$ as in~(\ref{eq:X=IpBKAx0Dw}), the control input  as in~(\ref{eq:U=AxDw}), the objective function~(\ref{eq:convertedObjFunc}) and the boundary conditions
	\begin{equation}\label{eq:X0augmented}
		\boldsymbol{\mu}_0 = \begin{bmatrix}\mathbbm{1}_{n_x}\\ \mu_0 \end{bmatrix},\qquad \mathbf{\Sigma}_0 = \begin{bmatrix}
		0_{n_x} & 0_{n_x}\\0_{n_x} & \Sigma_0
		\end{bmatrix} \succeq 0.
	\end{equation} 
	Then, the objective function takes the form
	\begin{equation}
		J(K) = \mathtt{tr}\left(\left(\left(I+\bm{\mathcal{B}}K\right)^{\top} \bar{\bm{Q}} \left(I+\bm{\mathcal{B}}K\right) + K^\top \bar{R} K\right)\left(\bm{\mathcal{A}}\left(\boldsymbol{\mu_0 \mu_0}^\top + \mathbf{\Sigma}_0\right)\bm{\mathcal{A}}^\top + \bm{\mathcal{DD}}^\top\right)\right).\label{eq:Kobj2Solve}
	\end{equation}
which is a quadratic expression in $K$.
\end{Proposition}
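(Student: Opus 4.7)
The plan is to substitute the closed-form expressions (\ref{eq:X=IpBKAx0Dw}) and (\ref{eq:U=AxDw}) into the cost (\ref{eq:convertedObjFunc}), then collapse the expectation into a single trace via the identity $y^\top M y = \mathtt{tr}(M y y^\top)$ combined with linearity of expectation. Define the auxiliary random vector $\xi \triangleq \bm{\mathcal{A}}\boldX_0 + \bm{\mathcal{D}}W$ so that $\boldX = (I+\bm{\mathcal{B}}K)\xi$ and $U = K\xi$. First I would observe that, because $\bar{\bm{Q}} = \texttt{blkdiag}(0,\bar{Q})$ and $\boldX = [\mathbbm{1}_{n_x}^\top, X^\top]^\top$, we have $X^\top \bar{Q} X = \boldX^\top \bar{\bm{Q}} \boldX$, so the cost can be written entirely in terms of $\boldX$ and $U$ as $J = \mathbb{E}[\boldX^\top \bar{\bm{Q}} \boldX + U^\top \bar{R} U]$.

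Substituting the expressions above, the integrand factors as
\begin{equation*}
\boldX^\top \bar{\bm{Q}} \boldX + U^\top \bar{R} U = \xi^\top M_K \xi, \qquad M_K \triangleq (I+\bm{\mathcal{B}}K)^\top \bar{\bm{Q}} (I+\bm{\mathcal{B}}K) + K^\top \bar{R} K .
\end{equation*}
Since $M_K$ is deterministic, the trace trick yields $J(K) = \mathtt{tr}(M_K\,\mathbb{E}[\xi\xi^\top])$. It therefore suffices to evaluate $\mathbb{E}[\xi\xi^\top]$. Expanding and using linearity,
\begin{equation*}
\mathbb{E}[\xi\xi^\top] = \bm{\mathcal{A}}\,\mathbb{E}[\boldX_0\boldX_0^\top]\,\bm{\mathcal{A}}^\top + \bm{\mathcal{A}}\,\mathbb{E}[\boldX_0 W^\top]\,\bm{\mathcal{D}}^\top + \bm{\mathcal{D}}\,\mathbb{E}[W\boldX_0^\top]\,\bm{\mathcal{A}}^\top + \bm{\mathcal{D}}\,\mathbb{E}[WW^\top]\,\bm{\mathcal{D}}^\top.
\end{equation*}

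Next I would dispose of the cross terms: the $\mathbbm{1}_{n_x}$ block of $\boldX_0$ is deterministic and $\mathbb{E}[W] = 0$, while $\mathbb{E}[x_0 W^\top] = 0$ is the standing assumption from (\ref{eq:x0x0x0WWW}); together these give $\mathbb{E}[\boldX_0 W^\top] = 0$. The remaining pieces are $\mathbb{E}[WW^\top] = I_{Nn_w}$ and $\mathbb{E}[\boldX_0\boldX_0^\top] = \boldsymbol{\mu}_0\boldsymbol{\mu}_0^\top + \boldsymbol{\Sigma}_0$, the latter requiring only a direct block computation from $\boldX_0 = [\mathbbm{1}_{n_x}^\top, x_0^\top]^\top$ and the definitions in (\ref{eq:X0augmented}); the zero upper-left block of $\boldsymbol{\Sigma}_0$ is exactly what is needed because the $\mathbbm{1}_{n_x}$ component contributes no covariance. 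Substituting these two moments back into the trace and collecting terms gives precisely (\ref{eq:Kobj2Solve}). Quadraticity in $K$ is then immediate: $M_K$ is quadratic in $K$ and $\mathbb{E}[\xi\xi^\top]$ does not depend on $K$.

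The only step requiring real care is the cross-term vanishing: one must verify $\mathbb{E}[\boldX_0 W^\top] = 0$ at the augmented level, using both the determinism of $\mathbbm{1}_{n_x}$ and the uncorrelatedness of $x_0$ with $W$, and one must confirm that the augmented $\bm{\mathcal{A}}$, $\bm{\mathcal{B}}$, $\bm{\mathcal{D}}$ preserve the right block structure so that $\bar{\bm{Q}}$ acts correctly on $\boldX$. Beyond that, the proof is essentially a substitution followed by a single trace identity, with no nontrivial obstacle.
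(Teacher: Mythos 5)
Your proposal is correct and follows essentially the same route as the paper: substitute (\ref{eq:X=IpBKAx0Dw}) and (\ref{eq:U=AxDw}), use the cyclic-permutation/trace identity to pull out $\mathbb{E}[(\bm{\mathcal{A}}\boldX_0+\bm{\mathcal{D}}W)(\bm{\mathcal{A}}\boldX_0+\bm{\mathcal{D}}W)^\top]$, and evaluate it with $\mathbb{E}[\boldX_0 W^\top]=0$, $\mathbb{E}[WW^\top]=I$, and $\mathbb{E}[\boldX_0\boldX_0^\top]=\boldsymbol{\mu}_0\boldsymbol{\mu}_0^\top+\mathbf{\Sigma}_0$. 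Your explicit check that $X^\top\bar{Q}X=\boldX^\top\bar{\bm{Q}}\boldX$ under the augmentation is a detail the paper leaves implicit, but otherwise the arguments coincide.
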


\begin{proof}
	Using equations~(\ref{eq:X=IpBKAx0Dw}) and (\ref{eq:U=AxDw}) the objective function~(\ref{eq:convertedObjFunc}) can be written as follows
	\begin{align}
		J(K) = \mathbb{E}\left[(\bm{\mathcal{A}}\bm{x}_0 + \bm{\mathcal{D}}W)^\top(I+\bm{\mathcal{B}}K)^{\top} \bar{\bm{Q}} (I+\bm{\mathcal{B}}K)(\bm{\mathcal{A}}\bm{x}_0 + \bm{\mathcal{D}}W)+
		(\bm{\mathcal{A}}\bm{x}_0 + \bm{\mathcal{D}}W)^\top K^\top \bar{R} K(\bm{\mathcal{A}}\bm{x}_0 + \bm{\mathcal{D}}W) \right].
	\end{align}
Equivalently,
	\begin{equation}
	\begin{aligned}
	J(K) &= \mathtt{tr} (\mathbb{E} [(\bm{\mathcal{A}}\bm{x}_0 + \bm{\mathcal{D}}W)^\top(I+\bm{\mathcal{B}}K)^{\top} \bar{\bm{Q}} (I+\bm{\mathcal{B}}K)(\bm{\mathcal{A}}\bm{x}_0+\bm{\mathcal{D}}W)\\
	&\hspace{4cm}+(\bm{\mathcal{A}}\bm{x}_0 + \bm{\mathcal{D}}W)^\top K^\top \bar{R} K(\bm{\mathcal{A}}\bm{x}_0 + \bm{\mathcal{D}}W) ]),
	\end{aligned}
	\end{equation}
Under cyclic permutations, the trace is invariant and hence
	\begin{subequations}
		\begin{align}
		J(K) &=\mathtt{tr}\left(\mathbb{E}\left[
		\left((I+\bm{\mathcal{B}}K)^{\top} \bar{\bm{Q}} (I+\bm{\mathcal{B}}K) +  K^\top \bar{R} K\right) (\bm{\mathcal{A}}\bm{x}_0 + \bm{\mathcal{D}}W)(\bm{\mathcal{A}}\bm{x}_0 + \bm{\mathcal{D}}W)^\top
		\right]\right),\\
		&= \mathtt{tr}\left(\left((I+\bm{\mathcal{B}}K)^{\top} \bar{\bm{Q}} (I+\bm{\mathcal{B}}K) +  K^\top \bar{R} K\right)
		\mathbb{E}\left[(\bm{\mathcal{A}}\bm{x}_0 + \bm{\mathcal{D}}W)(\bm{\mathcal{A}}\bm{x}_0 + \bm{\mathcal{D}}W)^\top
		\right]\right).
		\end{align} 
	\end{subequations}
Using the expression
	\begin{align}
	\mathbb{E}\left[(\bm{\mathcal{A}}\bm{x}_0 + \bm{\mathcal{D}}W)(\bm{\mathcal{A}}\bm{x}_0 + \bm{\mathcal{D}}W)^\top
	\right]&= \bm{\mathcal{A}}\mathbb{E}[\bm{x}_0\bm{x}_0^\top]\bm{\mathcal{A}}^\top +
	\bm{\mathcal{A}}\mathbb{E}[\bm{x}_0W^\top]\bm{\mathcal{D}}^\top +
	\bm{\mathcal{D}}\mathbb{E}[W\bm{x}_0^\top]\bm{\mathcal{A}}^\top + 
	\bm{\mathcal{D}}\mathbb{E}[WW^\top]\bm{\mathcal{D}}^\top,
	\end{align}
along with equation~(\ref{eq:x0x0x0WWW}) and 
\begin{equation}
\mathbb{E}[\bm{x}_0\bm{x}_0^\top] = \mathbf{\Sigma}_0 + \boldsymbol{\mu}_0\boldsymbol{\mu}_0^\top,\qquad\mathbb{E}[\bm{x}_0W^\top] = 0,
\end{equation}
we finally obtain~(\ref{eq:Kobj2Solve}).
\end{proof}

\subsection{Conversion of Chance Constraint to Deterministic Inequality Constraint}
In this section we show how to convert the chance constraint~(\ref{eq:convertedChanceConstraints}) to a form that is more amenable to computations.
To this end, we use the  approach in \cite{blackmore2009convex}. 
First, we assume that the feasible region $\mathcal{X}$ is defined as an intersection of $M$ linear inequality constraints as follows
\begin{equation}
	\mathcal{X} \triangleq \bigcap_{j=1}^M \{\boldX:\alpha_j^\top \boldX \leq \beta_j\},\label{eq:convertedX}
\end{equation}
where $\alpha_j \in \Re^{(N+2)n_x}$ and $\beta_j \in \Re$ with $j=1,2,\ldots,M$. 
Thus, the chance constraint~(\ref{eq:convertedChanceConstraints}) is converted to the condition 
\begin{subequations}
	\begin{align}
	\texttt{Pr}(\alpha_j^\top \boldX > \beta_j) &\leq p_j,\qquad j = 1,\ldots, M,\label{eq:conservativeEqj}\\
	\sum_{j=1}^{M}p_j &\leq P_{\rm{fail}}.	 \label{eq:sumOfP}
	\end{align}\label{eq:conservativeChanceConstraints}
\end{subequations}
Using the Boole-Bonferroni inequality~\cite{prekopa1988boole}, 
the authors of \cite{blackmore2009convex} showed that a feasible solution to the problem (\ref{eq:convertedX})-(\ref{eq:conservativeChanceConstraints}) is a feasible solution to the original chance-constrained problem.
Note that the constraint (\ref{eq:conservativeEqj}) can also be written as
\begin{equation} \label{eq:PrNew}
	\texttt{Pr}(\alpha_j^\top \boldX \leq \beta_j) \geq 1-p_j.
\end{equation}
As a result,  $\alpha_j^\top \boldX$ is a univariate Gaussian random variable such that $\alpha_j^\top \boldX \sim \mathcal{N}(\alpha_j^\top \bar{\boldX},\alpha_j^\top\Sigma_\boldX\alpha_j)$,
where 
\begin{equation}  \label{eq:barX}
\bar{\boldX} = \mathbb{E}[\boldX] = (I+\bm{\mathcal{B}}K)\bm{\mathcal{A}}\boldsymbol{\mu}_0,
\end{equation}
and
\begin{equation}
\begin{aligned} \label{eq:SigmaX}
\Sigma_\boldX &= \mathbb{E}\left[(\boldX-\bar{\boldX})(\boldX-\bar{\boldX})^\top\right]
	=\mathbb{E}\left[\left(\left(I+\bm{\mathcal{B}}K\right)\left(\bm{\mathcal{A}}(\bm{x}_0-\boldsymbol{\mu}_0)+\bm{\mathcal{D}}W\right)\right)\left(\left(I+\bm{\mathcal{B}}K\right)\left(\bm{\mathcal{A}}(\bm{x}_0-\boldsymbol{\mu}_0)+\bm{\mathcal{D}}W\right)\right)^{\top}\right],\\
	&=(I+\bm{\mathcal{B}}K)(\bm{\mathcal{A}}\mathbf{\Sigma}_0\bm{\mathcal{A}}^\top+\bm{\mathcal{D}}\bm{\mathcal{D}}^\top)(I+\bm{\mathcal{B}}K)^\top.
\end{aligned}
\end{equation}
It follows from inequality (\ref{eq:PrNew}) that
\begin{align}
	\mathtt{Pr}(\alpha_j^\top \boldX\leq\beta_j) &= \frac{1}{\sqrt{2\pi\alpha_j^\top\Sigma_\boldX\alpha_j}}\int_{-\infty}^{\beta_j}\exp\left(-\frac{(\xi-\alpha_j^\top \bar{\boldX})^2}{2\alpha_j^\top\Sigma_\boldX\alpha_j}\right)\mathrm{d}\xi,\\
	&=\Phi\left(\frac{\beta_j - \alpha_j^\top\bar{\boldX}}{\sqrt{\alpha_j^\top\Sigma_\boldX\alpha_j}}\right) \geq 1-p_j,
\end{align}
where $\Phi$ is the cumulative distribution function of the standard normal distribution, which is a monotonically increasing function. 
Thus, 
\begin{equation}
\frac{\beta_j - \alpha_j^\top\bar{\boldX}}{\sqrt{\alpha_j^\top\Sigma_\boldX\alpha_j}} \geq \Phi^{-1}\left(1-p_j\right),
\end{equation}
where $\Phi^{-1}$ is the inverse of $\Phi$.
Therefore, 
\begin{equation}   \label{eq:deterministicChanceConstraint}
\alpha_j^\top \bar{\boldX} - \beta_j + \sqrt{\alpha_j^\top\Sigma_\boldX\alpha_j}\, \Phi^{-1}\left(1-p_j\right)\leq 0.
\end{equation}
Previous  works~\cite{blackmore2009convex,blackmore2011chance,carvalho2014stochastic} assumed some prior knowledge about the covariance $\Sigma_\boldX$, enabling Eq.~(\ref{eq:deterministicChanceConstraint}) to be a linear inequality constraint.
However, as we are interested in the covariance steering problem, we cannot assume any prior knowledge in terms of $\Sigma_\boldX$. 
Thus, we prove the following result.
 
\begin{Theorem}
	Let $\bar{\boldX}$ as in~(\ref{eq:barX}), $\Sigma_\boldX$ as in~(\ref{eq:SigmaX}), and $\boldsymbol{\mu}_0$ and $\bm{\Sigma}_0 $ as in~(\ref{eq:X0augmented}). With the assumption, $\Sigma_0 \succeq 0$, the inequality constraint~(\ref{eq:deterministicChanceConstraint}) 	is converted to the following inequality constraint.
	\begin{equation}   \label{eq:deterministicChanceCosntraintBilinear}
	\alpha_j^\top(I+\bm{\mathcal{B}}K)\bm{\mathcal{A}}\boldsymbol{\mu}_0 - \beta_j + \|(\bm{\mathcal{A}}\mathbf{\Sigma}_0\bm{\mathcal{A}}^\top+\bm{\mathcal{D}}\bm{\mathcal{D}}^\top)^{1/2}(I+\bm{\mathcal{B}}K)^\top\alpha_j\| \, \Phi^{-1}\left(1-p_j\right) \leq 0.
	\end{equation}
\end{Theorem}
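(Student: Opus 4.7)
The plan is to derive the reformulated inequality by direct substitution of the expressions for $\bar{\boldX}$ and $\Sigma_\boldX$ established in equations~(\ref{eq:barX}) and~(\ref{eq:SigmaX}), and then rewrite the quadratic form under the square root as the squared Euclidean norm of an explicit vector. The first term $\alpha_j^\top \bar{\boldX}$ in~(\ref{eq:deterministicChanceConstraint}) becomes $\alpha_j^\top(I+\bm{\mathcal{B}}K)\bm{\mathcal{A}}\boldsymbol{\mu}_0$ immediately by~(\ref{eq:barX}), while the $-\beta_j$ and $\Phi^{-1}(1-p_j)$ factors are unchanged; so the only nontrivial step is handling the $\sqrt{\alpha_j^\top \Sigma_\boldX \alpha_j}$ term.

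For that term, I would first argue that the matrix $M \triangleq \bm{\mathcal{A}}\mathbf{\Sigma}_0\bm{\mathcal{A}}^\top+\bm{\mathcal{D}}\bm{\mathcal{D}}^\top$ is positive semidefinite, and hence admits a unique symmetric PSD square root $M^{1/2}$. This follows because $\mathbf{\Sigma}_0 \succeq 0$ (by the assumption $\Sigma_0 \succeq 0$ together with the block structure in~(\ref{eq:X0augmented})), so $\bm{\mathcal{A}}\mathbf{\Sigma}_0\bm{\mathcal{A}}^\top \succeq 0$, and $\bm{\mathcal{D}}\bm{\mathcal{D}}^\top \succeq 0$ trivially; the sum of PSD matrices is PSD. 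This justifies the notation $(\bm{\mathcal{A}}\mathbf{\Sigma}_0\bm{\mathcal{A}}^\top+\bm{\mathcal{D}}\bm{\mathcal{D}}^\top)^{1/2}$ appearing on the right-hand side of~(\ref{eq:deterministicChanceCosntraintBilinear}).

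Next, substituting~(\ref{eq:SigmaX}) directly gives
\begin{equation*}
\alpha_j^\top \Sigma_\boldX \alpha_j = \alpha_j^\top (I+\bm{\mathcal{B}}K)\, M\, (I+\bm{\mathcal{B}}K)^\top \alpha_j = \bigl[M^{1/2}(I+\bm{\mathcal{B}}K)^\top \alpha_j\bigr]^\top \bigl[M^{1/2}(I+\bm{\mathcal{B}}K)^\top \alpha_j\bigr],
\end{equation*}
using $M = M^{1/2} M^{1/2}$ and symmetry of $M^{1/2}$. Taking the (principal) square root of both sides yields $\sqrt{\alpha_j^\top \Sigma_\boldX \alpha_j} = \|M^{1/2}(I+\bm{\mathcal{B}}K)^\top \alpha_j\|$, which is well-defined and nonnegative. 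Substituting this identity together with the expression for $\bar{\boldX}$ into~(\ref{eq:deterministicChanceConstraint}) produces~(\ref{eq:deterministicChanceCosntraintBilinear}) verbatim.

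There is no real obstacle here beyond bookkeeping: the only subtlety is making sure $M^{1/2}$ is well-defined, which is where the assumption $\Sigma_0 \succeq 0$ is used. I would note in passing that the resulting constraint is a second-order cone constraint in the decision variable $K$ (the norm term is convex in $K$ since $\alpha_j \mapsto M^{1/2}(I+\bm{\mathcal{B}}K)^\top \alpha_j$ is affine in $K$), which is the property needed for the SDP-based solution developed in the sequel.
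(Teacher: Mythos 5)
Your proof is correct and follows essentially the same route as the paper: use $\Sigma_0 \succeq 0$ to get $\bm{\mathcal{A}}\mathbf{\Sigma}_0\bm{\mathcal{A}}^\top+\bm{\mathcal{D}}\bm{\mathcal{D}}^\top \succeq 0$, factor it through its PSD square root, rewrite $\alpha_j^\top\Sigma_\boldX\alpha_j$ as the squared Euclidean norm of the vector $(\bm{\mathcal{A}}\mathbf{\Sigma}_0\bm{\mathcal{A}}^\top+\bm{\mathcal{D}}\bm{\mathcal{D}}^\top)^{1/2}(I+\bm{\mathcal{B}}K)^\top\alpha_j$, and substitute $\bar{\boldX}$ from~(\ref{eq:barX}). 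Your closing remark is fine for fixed $p_j$, but note the paper calls~(\ref{eq:deterministicChanceCosntraintBilinear}) bilinear precisely because $p_j$ is still a decision variable there; the second-order-cone structure you describe is recovered only after $p_{j,\rm{fail}}$ is fixed a priori.
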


\begin{proof}
	Since $\Sigma_0 \succeq 0$, it follows that $\bm{\Sigma}_0 \succeq 0$ and
$	\bm{\mathcal{A}}\mathbf{\Sigma}_0\bm{\mathcal{A}}^\top+\bm{\mathcal{D}}\bm{\mathcal{D}}^\top \succeq 0$.
	Therefore, equation~(\ref{eq:SigmaX}) becomes
	\begin{equation}
		\Sigma_\boldX = (I+\bm{\mathcal{B}}K)(\bm{\mathcal{A}}\mathbf{\Sigma}_0\bm{\mathcal{A}}^\top+\bm{\mathcal{D}}\bm{\mathcal{D}}^\top)^{1/2}(\bm{\mathcal{A}}\mathbf{\Sigma}_0\bm{\mathcal{A}}^\top+\bm{\mathcal{D}}\bm{\mathcal{D}}^\top)^{1/2}(I+\bm{\mathcal{B}}K)^\top,
	\end{equation}
	and~(\ref{eq:deterministicChanceConstraint}) can be rewritten as
	\begin{equation}
		\alpha_j^\top\bar{\boldX}-\beta_j + \sqrt{\alpha_j^\top(I+\bm{\mathcal{B}}K)(\bm{\mathcal{A}}\mathbf{\Sigma}_0\bm{\mathcal{A}}^\top+\bm{\mathcal{D}}\bm{\mathcal{D}}^\top)^{1/2}(\bm{\mathcal{A}}\mathbf{\Sigma}_0\bm{\mathcal{A}}^\top+\bm{\mathcal{D}}\bm{\mathcal{D}}^\top)^{1/2}(I+\bm{\mathcal{B}}K)^\top\alpha_j} \, \Phi^{-1}\left(1-p_j\right)\leq 0,
	\end{equation}
	Note that since $(\bm{\mathcal{A}}\mathbf{\Sigma}_0\bm{\mathcal{A}}^\top+\bm{\mathcal{D}}\bm{\mathcal{D}}^\top)^{1/2}(I+\bm{\mathcal{B}}K)^\top\alpha_j$ is a vector, one obtains that
	\begin{equation}
	\alpha_j^\top\bar{\boldX}-\beta_j+\|(\bm{\mathcal{A}}\mathbf{\Sigma}_0\bm{\mathcal{A}}^\top+\bm{\mathcal{D}}\bm{\mathcal{D}}^\top)^{1/2}(I+\bm{\mathcal{B}}K)^\top\alpha_j\| \, \Phi^{-1}\left(1-p_j\right)\leq0,
	\end{equation}
	where $\|\cdot\|$ denotes the 2-norm of a vector. 
	Using Eq.~(\ref{eq:barX}), inequality (\ref{eq:deterministicChanceCosntraintBilinear}) follows.
\end{proof}

Note that the inequality constraint (\ref{eq:deterministicChanceCosntraintBilinear}) is a bilinear constraint, which makes it difficult to efficiently solve this problem. 
Thus, we convert the  chance constraints~(\ref{eq:conservativeChanceConstraints}) as follows
\begin{subequations}
	\begin{align}
	\texttt{Pr}(\alpha_j^\top \boldX > \beta_j) &\leq p_{j,\rm{fail}},\qquad  j = 1,\ldots M,\\
	\sum_{j=1}^{M}p_{j,\rm{fail}} &\leq P_{\rm{fail}}. \label{eq:sumofPfailure}
	\end{align}\label{eq:prespecifiedconservativeChanceConstraints}
\end{subequations}
Note that, unlike $p_j$, the $p_{j,\rm{fail}}$ is not a decision variable but a pre-specified value satisfying inequality~(\ref{eq:sumofPfailure}).
This alternative formulation implies that one needs to specify $p_j$ a priori. 

In summary, the chance constraints are formulated as follows. 
\begin{equation}
\alpha_j^\top(I+\bm{\mathcal{B}}K)\bm{\mathcal{A}}\boldsymbol{\mu}_0 + \|(\bm{\mathcal{A}}\mathbf{\Sigma}_0\bm{\mathcal{A}}^\top+\bm{\mathcal{D}}\bm{\mathcal{D}}^\top)^{1/2}(I+\bm{\mathcal{B}}K)^\top\alpha_j\| \Phi^{-1}\left(1-p_{j,\rm{fail}}\right) - \beta_j \leq 0. \label{eq:deterministicChanceConstraintsOurs}
\end{equation}
Note that, unlike the case described in Section \ref{sec:NoChanceConstraint}, where no chance constraints exist, we cannot decouple the mean and covariance steering problems owing to (\ref{eq:deterministicChanceConstraintsOurs}).

\subsection{Terminal Gaussian Distribution Constraint}
When solving covariance steering problems, we steer the mean to the pre-specified $\mu_N$ as well as the covariance to the pre-specified $\Sigma_N$. 
While we are converting the original chance-constrained covariance steering problem to a convex programming problem, as discussed in Section~\ref{sec:CSOnly}, the terminal covariance constraint~(\ref{eq:SigmaN}) is not convex.
We therefore relax this constraint to the following inequality constraint
\begin{equation}   \label{eq:terminalineq}
	\mathbb{E}[\tilde{x}_N\tilde{x}_N^\top] \preceq \Sigma_N.
\end{equation}
This condition implies that the covariance of the terminal state is smaller than a pre-specified $\Sigma_N$ which is reasonable in practice.
Note also that this change of terminal constraint relaxes the chance-constraint requirement for $\Sigma_N$ as well. 
Namely, if $\mu_N$ is inside the feasible region, $\Sigma_N$ can be any value as far as it is positive definite. 
We are now ready to prove the following result.

\begin{Proposition}

	The terminal constraints (\ref{eq:muN}) and (\ref{eq:terminalineq}) can be formulated as
	\begin{align}
		&\mu_N = \mathbf{E}_N(I+\bm{\mathcal{B}}K)\bm{\mathcal{A}}\boldsymbol{\mu}_0, \label{eq:terminalConstraintmu}\\
		&1 - \| (\bm{\mathcal{A}}\mathbf{\Sigma}_0\bm{\mathcal{A}}^\top+\bm{\mathcal{D}}\bm{\mathcal{D}}^\top)^{1/2}(I+\bm{\mathcal{B}}K)^\top \mathbf{E}_N^\top\Sigma_N^{-1/2}  \|_2 \geq 0, \label{eq:finalBoundaryCondition}
	\end{align}
	where $\mathbf{E}_N \triangleq \begin{bmatrix}0_{n_x} & E_N	\end{bmatrix} \in \Re^{n_x\times (N+2)n_x}$, and $\|\cdot\|_2$ denotes the 2 norm of a matrix.
\end{Proposition}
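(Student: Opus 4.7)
The plan is to handle the two claimed conditions separately: the mean constraint is a direct rewriting using equation (\ref{eq:barX}), while the covariance constraint requires a standard matrix-inequality manipulation that converts a Löwner ordering into a spectral-norm bound.

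For the mean, I would start from $\bar{\boldX} = (I+\bm{\mathcal{B}}K)\bm{\mathcal{A}}\boldsymbol{\mu}_0$ given in (\ref{eq:barX}). Since $\boldX = [\mathbbm{1}_{n_x}^\top,\,X^\top]^\top$ and $\mathbf{E}_N = [0_{n_x},\,E_N]$, the selection simply extracts the last block of $\mathbb{E}[X]$, i.e.\ $\mathbf{E}_N\bar{\boldX} = E_N\mathbb{E}[X]$. Combining this with (\ref{eq:muN}) gives (\ref{eq:terminalConstraintmu}) immediately.

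For the covariance, I first observe that $\tilde{x}_N = E_N\tilde{X} = \mathbf{E}_N(\boldX - \bar{\boldX})$, so $\mathbb{E}[\tilde{x}_N\tilde{x}_N^\top] = \mathbf{E}_N\Sigma_\boldX \mathbf{E}_N^\top$. Substituting (\ref{eq:SigmaX}), the relaxed terminal constraint (\ref{eq:terminalineq}) becomes
\begin{equation*}
\mathbf{E}_N(I+\bm{\mathcal{B}}K)(\bm{\mathcal{A}}\mathbf{\Sigma}_0\bm{\mathcal{A}}^\top+\bm{\mathcal{D}}\bm{\mathcal{D}}^\top)(I+\bm{\mathcal{B}}K)^\top \mathbf{E}_N^\top \preceq \Sigma_N.
\end{equation*}
Since $\Sigma_0 \succeq 0$ implies $\bm{\mathcal{A}}\mathbf{\Sigma}_0\bm{\mathcal{A}}^\top+\bm{\mathcal{D}}\bm{\mathcal{D}}^\top \succeq 0$, this matrix admits a Hermitian square root. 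Setting $M \triangleq (\bm{\mathcal{A}}\mathbf{\Sigma}_0\bm{\mathcal{A}}^\top+\bm{\mathcal{D}}\bm{\mathcal{D}}^\top)^{1/2}(I+\bm{\mathcal{B}}K)^\top\mathbf{E}_N^\top$, the LHS factors as $M^\top M$, and the constraint is $M^\top M \preceq \Sigma_N$.

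The last step exploits $\Sigma_N \succ 0$: congruence by $\Sigma_N^{-1/2}$ (which preserves the semidefinite ordering since $\Sigma_N^{-1/2}$ is symmetric and nonsingular) converts $M^\top M \preceq \Sigma_N$ into $(M\Sigma_N^{-1/2})^\top(M\Sigma_N^{-1/2}) \preceq I$, which is equivalent to $\|M\Sigma_N^{-1/2}\|_2 \leq 1$ by the standard identification of the spectral norm with the square root of the largest eigenvalue of $A^\top A$. This is exactly (\ref{eq:finalBoundaryCondition}). The only subtle point, and where I would write the proof most carefully, is the chain of equivalences $M^\top M \preceq \Sigma_N \iff (M\Sigma_N^{-1/2})^\top(M\Sigma_N^{-1/2}) \preceq I \iff \|M\Sigma_N^{-1/2}\|_2 \leq 1$, since this is what genuinely uses $\Sigma_N \succ 0$ and converts a matrix inequality into a scalar, SOCP-friendly condition; the rest is bookkeeping.
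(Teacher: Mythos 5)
Your proposal is correct and follows essentially the same route as the paper: both derive the mean condition directly from (\ref{eq:barX}), reduce (\ref{eq:terminalineq}) to the matrix inequality (\ref{eq:terminalConstraintSigma}), perform the congruence by $\Sigma_N^{-1/2}$, and identify the resulting condition with the spectral norm via $\lambda_{\max}(A^\top A)=\|A\|_2^2$ (the paper phrases this through an explicit orthogonal diagonalization, but it is the same eigenvalue argument). No gaps; your version is just a slightly more streamlined write-up of the paper's proof.
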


\begin{proof}
	It follows from~(\ref{eq:barX}) and (\ref{eq:SigmaX}) that 
	\begin{align}
		\mathbb{E}\left[x_N\right] &= \mathbf{E}_N(I+\bm{\mathcal{B}}K)\bm{\mathcal{A}}\boldsymbol{\mu}_0,    \label{eq:terminalConstraintmu2}\\
		\mathbb{E}\left[\tilde{x}_N\tilde{x}_N^\top\right] &= \mathbf{E}_N(I+\bm{\mathcal{B}}K)(\bm{\mathcal{A}}\mathbf{\Sigma}_0\bm{\mathcal{A}}^\top+\bm{\mathcal{D}}\bm{\mathcal{D}}^\top)(I+\bm{\mathcal{B}}K)^\top \mathbf{E}_N^\top.\label{eq:terminalconstraintEq}
	\end{align} 
Using inequality (\ref{eq:terminalineq}) it follows that~(\ref{eq:terminalconstraintEq}) results in the following inequality constraint, which is convex in $K$
	\begin{equation}
		\mathbf{E}_N(I+\bm{\mathcal{B}}K)(\bm{\mathcal{A}}\mathbf{\Sigma}_0\bm{\mathcal{A}}^\top+\bm{\mathcal{D}}\bm{\mathcal{D}}^\top)(I+\bm{\mathcal{B}}K)^\top \mathbf{E}_N^\top \preceq \Sigma_N.\label{eq:terminalConstraintSigma}
	\end{equation}
Since by assumption $\Sigma_N \succ 0$, inequality~(\ref{eq:terminalConstraintSigma}) becomes
	\begin{equation}
		I_{n_x} - \Sigma_N^{-1/2}\mathbf{E}_N(I+\bm{\mathcal{B}}K)(\bm{\mathcal{A}}\mathbf{\Sigma}_0\bm{\mathcal{A}}^\top+\bm{\mathcal{D}}\bm{\mathcal{D}}^\top)(I+\bm{\mathcal{B}}K)^\top \mathbf{E}_N^\top\Sigma_N^{-1/2} \succeq 0.
	\end{equation}
As being symmetric, the matrix $\Sigma_N^{-1/2}\mathbf{E}_N(I+\bm{\mathcal{B}}K)(\bm{\mathcal{A}}\mathbf{\Sigma}_0\bm{\mathcal{A}}^\top+\bm{\mathcal{D}}\bm{\mathcal{D}}^\top)(I+\bm{\mathcal{B}}K)^\top \mathbf{E}_N^\top\Sigma_N^{-1/2}$ is diagonalizable via an orthogonal matrix $S \in \Re^{n_x \times n_x}$.
	Thus, 
	\begin{equation}
	S\left(I_{n_x} - \mathtt{diag}(\lambda_1,\ldots,\lambda_{n_x})\right)S^\top \succeq 0, \label{eq:SImdiagS}
	\end{equation}
	where $\lambda_1,\ldots,\lambda_{n_x}$ are the eigenvalues of  $\Sigma_N^{-1/2}\mathbf{E}_N(I+\bm{\mathcal{B}}K)(\bm{\mathcal{A}}\mathbf{\Sigma}_0\bm{\mathcal{A}}^\top+\bm{\mathcal{D}}\bm{\mathcal{D}}^\top)(I+\bm{\mathcal{B}}K)^\top \mathbf{E}_N^\top\Sigma_N^{-1/2}$.
	Inequality~(\ref{eq:SImdiagS}) is implied by
	\begin{equation}
		1 - \lambda_{\rm max}\left(\Sigma_N^{-1/2}\mathbf{E}_N(I+\bm{\mathcal{B}}K)(\bm{\mathcal{A}}\mathbf{\Sigma}_0\bm{\mathcal{A}}^\top+\bm{\mathcal{D}}\bm{\mathcal{D}}^\top)(I+\bm{\mathcal{B}}K)^\top \mathbf{E}_N^\top\Sigma_N^{-1/2} \right) \geq 0. \label{eq:1mlambdamatrix}
	\end{equation}
	Furthermore,
	\begin{subequations}
		\begin{align}
		&\lambda_{\rm max}\left(\Sigma_N^{-1/2}\mathbf{E}_N(I+\bm{\mathcal{B}}K)(\bm{\mathcal{A}}\mathbf{\Sigma}_0\bm{\mathcal{A}}^\top+\bm{\mathcal{D}}\bm{\mathcal{D}}^\top)(I+\bm{\mathcal{B}}K)^\top \mathbf{E}_N^\top\Sigma_N^{-1/2} \right), \\
		&=\lambda_{\rm max}\left(\Sigma_N^{-1/2}\mathbf{E}_N(I+\bm{\mathcal{B}}K)(\bm{\mathcal{A}}\mathbf{\Sigma}_0\bm{\mathcal{A}}^\top+\bm{\mathcal{D}}\bm{\mathcal{D}}^\top)^{1/2}(\bm{\mathcal{A}}\mathbf{\Sigma}_0\bm{\mathcal{A}}^\top+\bm{\mathcal{D}}\bm{\mathcal{D}}^\top)^{1/2}(I+\bm{\mathcal{B}}K)^\top \mathbf{E}_N^\top\Sigma_N^{-1/2} \right), \\
		&=\lambda_{\rm max}\left(\left((\bm{\mathcal{A}}\mathbf{\Sigma}_0\bm{\mathcal{A}}^\top+\bm{\mathcal{D}}\bm{\mathcal{D}}^\top)^{1/2}(I+\bm{\mathcal{B}}K)^\top \mathbf{E}_N^\top\Sigma_N^{-1/2}\right)^\top\left((\bm{\mathcal{A}}\mathbf{\Sigma}_0\bm{\mathcal{A}}^\top+\bm{\mathcal{D}}\bm{\mathcal{D}}^\top)^{1/2}(I+\bm{\mathcal{B}}K)^\top \mathbf{E}_N^\top\Sigma_N^{-1/2} \right) \right),\\
		&=\|(\bm{\mathcal{A}}\mathbf{\Sigma}_0\bm{\mathcal{A}}^\top+\bm{\mathcal{D}}\bm{\mathcal{D}}^\top)^{1/2}(I+\bm{\mathcal{B}}K)^\top \mathbf{E}_N^\top\Sigma_N^{-1/2}\|_2^2,
		\end{align}
	\end{subequations}
It follows that (\ref{eq:1mlambdamatrix}) is equivalent to 
	\begin{equation}  \label{eq:1mnorm}
	1 - \|(\bm{\mathcal{A}}\mathbf{\Sigma}_0\bm{\mathcal{A}}^\top+\bm{\mathcal{D}}\bm{\mathcal{D}}^\top)^{1/2}(I+\bm{\mathcal{B}}K)^\top \mathbf{E}_N^\top\Sigma_N^{-1/2}\|_2^2 \geq 0. 
	\end{equation}
\end{proof}

\begin{remark}
	The inequality constraint (\ref{eq:terminalConstraintSigma}) can also be implemented by  taking the Schur complement
	\begin{equation}
	\begin{bmatrix}
	\Sigma_N & \mathbf{E}_N(I+\bm{\mathcal{B}}K)(\bm{\mathcal{A}}\mathbf{\Sigma}_0\bm{\mathcal{A}}^\top+\bm{\mathcal{DD}}^\top)^{1/2}\\[5pt]
	(\bm{\mathcal{A}}\mathbf{\Sigma}_0\bm{\mathcal{A}}^\top+\bm{\mathcal{DD}}^\top)^{1/2}(I+\bm{\mathcal{B}}K)^\top \mathbf{E}_N^\top& I_{(N+2)n_x}
	\end{bmatrix}\succeq 0.
	\end{equation}
	This is the approach followed in~\cite{bakolas2016optimalCDC}.
\end{remark}

In summary, the problem this work solves is as follows:

\begin{Problem}\label{prob:ConvexProblem}
	Given the system state sequence Eqs.~(\ref{eq:barX}) and (\ref{eq:SigmaX}),	
	find the control gain $K$ in Eq.~(\ref{eq:Kdef}), the shape of which is specified as in Eq.~(\ref{eq:KK1KX}), that minimizes the objective function Eq.~(\ref{eq:Kobj2Solve}) subject to the terminal constraints Eqs.~(\ref{eq:terminalConstraintmu}) and (\ref{eq:finalBoundaryCondition}), and the chance constraints with pre-specified probability thresholds $p_{j,\rm{fail}}$ Eq.~(\ref{eq:deterministicChanceConstraintsOurs}).
\end{Problem}
Note that, because of the entries of $\bm{\mathcal{A}}$, $\bm{\mathcal{B}}$, and $\bm{\mathcal{D}}$, the initial state constraint Eq.~(\ref{eq:X0augmented}) is incorporated in the state sequence Eqs.~(\ref{eq:barX}) and (\ref{eq:SigmaX}).
It is also worth noticing that, unlike Problem \ref{prob:OriginalProblem}, Problem \ref{prob:ConvexProblem} is a convex programming problem, which is efficiently solvable using a nonlinear solver.

\section{Numerical Simulations}\label{sec:NumericalSimulation}

In this section we validate the proposed algorithm using a simple numerical example.
We use CVX \cite{cvx} with MOSEK \cite{mosek} to solve the relevant optimization problems. 

\subsection{Double Integrator} 
We consider the following linear time invariant system
\begin{equation}
	x_{k+1} = Ax_k + Bu_k + Dw_k,
\end{equation}
where $x_k \in \Re^{2}$, $u_k \in \Re$, and $w_k \in \Re^{2}$ and
\begin{equation}
	A = \begin{bmatrix}
	1 & 1 \\ 0 & 1
	\end{bmatrix},\qquad
	B = \begin{bmatrix}
	0 \\ 1
	\end{bmatrix},\qquad
	D = \begin{bmatrix}
	0.01 & 0 \\ 0 &0.01
	\end{bmatrix}.
\end{equation}
We also consider the following  objective function
\begin{equation}
J(u_0,\ldots,u_{N-1}) = \sum_{k=0}^{N-1}u_k^\top u_k,
\end{equation}
with the boundary conditions
\begin{align}
&\mu_0 = \begin{bmatrix}
0\\8
\end{bmatrix},
\qquad
\Sigma_0 = \begin{bmatrix}
1 & 0 \\ 0 & 0.5
\end{bmatrix},\\
&\mu_N = \begin{bmatrix}
6\\0
\end{bmatrix},
\qquad
\Sigma_N = \begin{bmatrix}
0.5 & 0 \\ 0 & 0.5
\end{bmatrix}.
\end{align}
In this numerical simulation, we set $N=10$.

Furthermore, we consider the following chance constraint: For all $k \in [0,\ldots,N]$
\begin{equation}
	\mathtt{Pr}(
	\begin{bmatrix}
	1 & 1
	\end{bmatrix}
	x_k \leq 20) > 1-p_{\rm{fail}}.
\end{equation}
Note that, if we do not steer the covariance, because the state covariance evolves according to
\begin{equation}
\Sigma_X = \mathcal{A}\Sigma_0\mathcal{A}^\top + \mathcal{DD}^\top,
\end{equation} 
the terminal state covariance $E_N\Sigma_XE_N^\top$  becomes large at the final step $N$, and this chance constraint cannot be satisfied.
The problem becomes infeasible, as illustrated in Fig.~\ref{fig:meanSteerOnly}.
The red line denotes the expected trajectory, computed using~(\ref{eq:meanSteerController}), and red ellipses denote the pre-specified $3\sigma$ bounds of the initial and terminal state distributions, and each blue ellipse represents the predicted $3\sigma$ bounds at each time step. 
Gray lines are the trajectories starting from 100 different initial conditions, which are sampled from $\mathcal{N}(\mu_0,\Sigma_0)$.
\begin{figure}[htb]
	\centering
	\includegraphics[width=0.6\columnwidth]{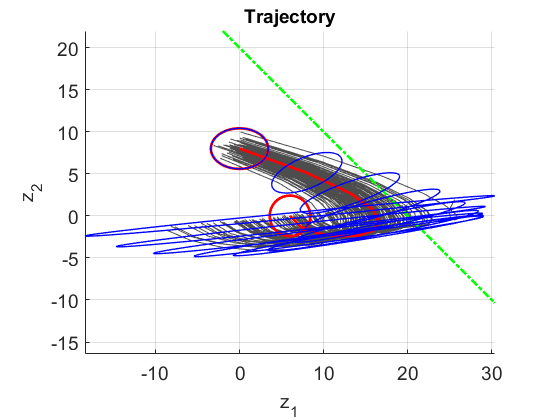}
	\caption{Result of mean steering.}\label{fig:meanSteerOnly}
\end{figure}
In this scenario, where we do not apply covariance steering and there are no chance constraints, the cost value was 23.90. 

Next, we steer the covariance so that the terminal covariance is less than the pre-specified covariance $\Sigma_N$ without the chance constraints. 
Figure~\ref{fig:CSNoConstraint} illustrates the results. 
The trajectories successfully converge to a region with covariance less than $\Sigma_N$.
In this scenario, the cost value was 24.05 and is slightly higher than the mean steering case. 
This increase in cost represents the trade-off to steer the covariance, in addition to the mean.

\begin{figure}[htb]
	\centering
	\includegraphics[width=0.6\columnwidth]{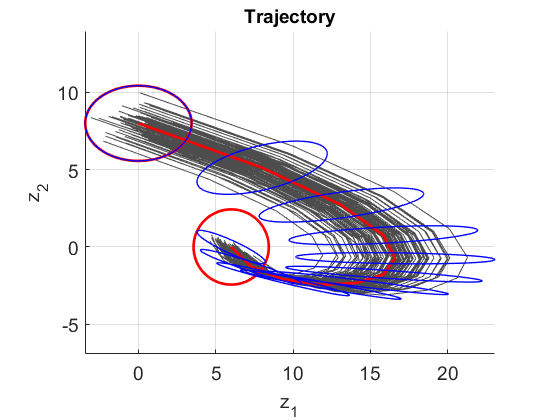}
	\caption{Result of covariance steering without chance constraints.}\label{fig:CSNoConstraint}
\end{figure}

Finally, we consider the case with chance constraints, which is shown in Fig.~\ref{fig:CSChanceConstraint}.
We set the probability threshold of failure to be $p_{j,\rm{fail}}=0.001$.
The blue ellipses in Fig.~\ref{fig:CSChanceConstraint} represent the corresponding confidence regions of the state, and some of them touch the constraints.  
In this scenario, the cost increased to 24.16. 

\begin{figure}[htb]
	\centering
	\includegraphics[width=0.6\columnwidth]{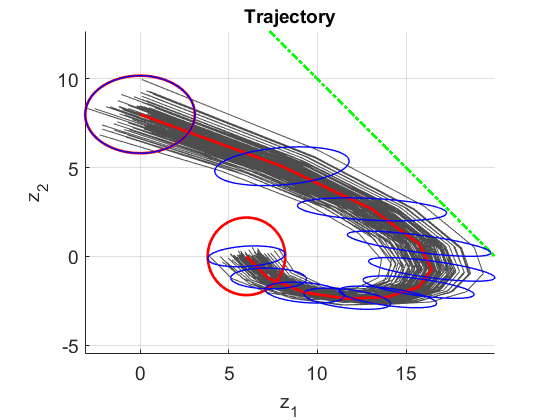}
	\caption{Result of covariance steering with chance constraints.}\label{fig:CSChanceConstraint}
\end{figure}

\subsection{Vehicle Path Planning}

Next, we consider the path-planning problem for a vehicle under the following time invariant system dynamics with $x_k = [x,y,v_x,v_y]^\top\in \Re^{4}$, $u_k =[a_x,a_y]^\top\in \Re^{2}$, $w_k \in \Re^{4}$ and
\begin{equation}
A = \begin{bmatrix}
1 & 0 & \Delta t & 0\\
0 & 1 & 0 & \Delta t\\
0 & 0 & 1 & 0\\
0 & 0 & 0 & 1
\end{bmatrix},\qquad
B = \begin{bmatrix}
\Delta t^2 & 0\\
0 &\Delta t^2\\
\Delta t & 0\\
0 & \Delta t
\end{bmatrix},\qquad
D = \begin{bmatrix}
0.01 & 0 & 0 & 0 \\
 0 & 0.01 & 0 & 0\\
 0 & 0 & 0.01 & 0\\
 0 & 0 & 0 & 0.01
\end{bmatrix},
\end{equation}
where $\Delta t$ is the time-step size, and we set $\Delta t = 0.2$. 
Figure \ref{fig:2ProblemSetup} illustrates the problem setup. 
The red circle denotes the $3\sigma$ error of the initial state distribution of $x$ and $y$ coordinates.
The magenta circle denotes the $3\sigma$ error of the terminal state distribution of $x$ and $y$ coordinates.
Specifically, the initial condition is 
\begin{equation}
\mu_0 = [-10, 1, 0, 0],\qquad \Sigma_0 = \mathtt{diag}(0.1,0.1,0.01,0.01),
\end{equation}
while the terminal constraint is 
\begin{equation}
\mu_N = [0, 0, 0, 0],\qquad \Sigma_N = 0.5\Sigma_0.
\end{equation}

The green dotted lines illustrate the state constraints. 
Specifically, 
\begin{equation}
\frac{1}{5}(x-1) \leq y \leq -\frac{1}{5}(x-1)
\end{equation}
The vehicle has to remain in the region between the two lines while moving from the red to the magenta regions. 
Such a ``cone''-shaped constraint is seen in many engineering applications, e.g., the instrument landing for aircraft, spacecraft rendezvous, and drone-landing on a moving platform. 
The probabilistic threshold for the violation of chance constraints was specified a priori, and we set $p_{j,\rm{fail}} = 0.0005$.
\begin{figure}[htb]
	\centering
	\includegraphics[width=0.6\columnwidth]{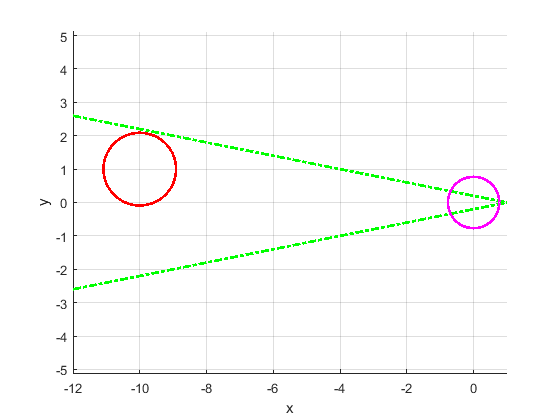}
	\caption{Problem setup for the second numerical example.}\label{fig:2ProblemSetup}
\end{figure}
The objective function is 
\begin{equation}
J(x_0,\ldots,x_{N-1},u_0,\ldots,u_{N-1}) = \sum_{k=0}^{N-1}x_k^\top Q_k x_k + u_k^\top R_k u_k,
\end{equation}
where  
\begin{equation}
Q_k = \mathtt{diag}(10,10,1,1),\qquad
R_k = \mathtt{diag}(10^3,10^3),
\end{equation}
with horizon $N=20$.
This problem is infeasible if we do not control the state covariance. 
See, for example, Figure~\ref{fig:2meanSteer}, which shows the results using only the mean steering controller~(\ref{eq:meanSteerController}).
As the covariance grows, it is impossible to find a feasible solution to this problem that will guarantee the satisfaction of chance constraints. 

\begin{figure}[htb]
	\centering
	\includegraphics[width=0.6\columnwidth]{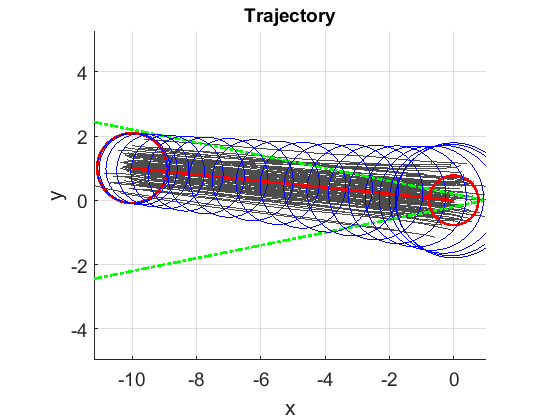}
	\caption{Mean steering results.}\label{fig:2meanSteer}
\end{figure}

Before discussing the case with chance constraints, we discuss the case without chance constraints, which is illustrated in Fig.~\ref{fig:2CSNC}.
By introducing the covariance steering, the uncertainty of the future trajectory successfully reduced.  

\begin{figure}[htb]
	\centering
	\includegraphics[width=0.6\columnwidth]{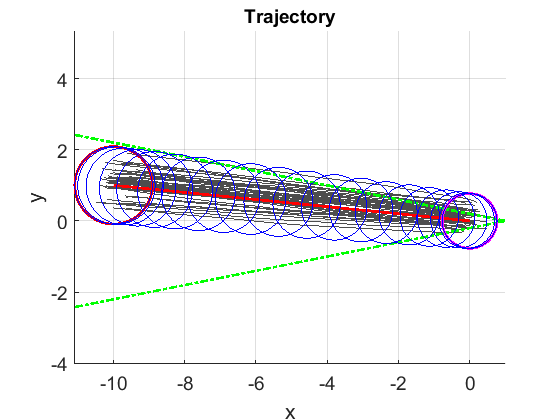}
	\caption{Covariance steering results without chance constraints.}\label{fig:2CSNC}
\end{figure}

Finally, Fig.~\ref{fig:2CCCS} illustrates the results of the proposed chance-constrained covariance steering approach. 
The error ellipse successfully changed its shape to avoid collision with the constraints while maintaining the terminal covariance constraints to be less than the pre-specified state covariance.

\begin{figure}[htb]
	\centering
	\includegraphics[width=0.6\columnwidth]{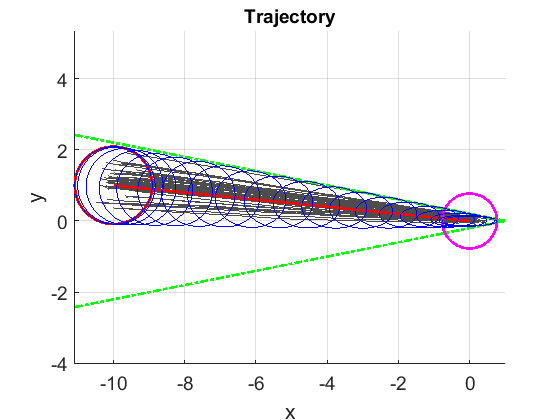}
	\caption{Covariance steering results with chance constraints.}\label{fig:2CCCS}
\end{figure}

\section{Summary}\label{sec:Summary}
This work has addressed the problem of optimal steering of the covariance for a stochastic linear time-varying system subject to chance constraints in discrete time.
We showed that if there are no chance constraints, we can independently design the mean and covariance steering controllers, and we introduced an analytic solution for the mean steering. 
We also showed that the optimal covariance steering problem with chance constraints can be converted to a convex programming problem.
The proposed approach was verified using numerical examples.
Future work will investigate the applications of the proposed approach to stochastic model predictive controllers.

\bibliographystyle{IEEEtran}
\bibliography{ChanceConstrained_CS}

\end{document}